\numberwithin{equation}{section}
\numberwithin{figure}{section}
\numberwithin{table}{section}
\newtheorem{theorem}{Theorem}[section]
\newtheorem{corollary}{Corollary}[section]
\newtheorem{lemma}{Lemma}[section]
\newtheorem{definition}{Definition}[section]
\newtheorem{example}{Example}[section]
\numberwithin{equation}{section}
\begin{document}
%\linenumbers

%\vspace*{0mm}

\begin{center}

{\bf \LARGE Searching for, and quantifying, non-convexity of functions}

\vspace*{8mm}

{\bf \large 
Youri Davydov$^{1,}\footnote{youri.davydov@univ-lille1.fr}$,
Elina Moldavskaya$^{2,}\footnote{elinamoldavskaya@gmail.com}$,
and
Ri\v cardas Zitikis$^{3,}\footnote{rzitikis@uwo.ca}$}

\bigskip

$^{1}$\textit{Chebyshev Laboratory, St.\,Petersburg State University, \break  Vasilyevsky Island, St.\,Petersburg 199178, Russia}

\medskip

$^{2}$\textit{Department of Mathematics \& Technion International School, \break Technion -- Israel Institute of Technology, Haifa 32000, Israel}

\medskip

$^{3}$\textit{School of Mathematical and Statistical Sciences,
Western University, \break London, Ontario N6A 5B7, Canada}

\end{center}

\bigskip

\begin{quote}
\noindent
\textbf{Abstract.} Convexity plays a prominent role in a number of problems, but practical considerations frequently give rise to non-convex functions. We suggest a method for determining convex regions, and also for assessing the lack of convexity in the other regions.  The method relies on a specially constructed decomposition of symmetric matrices, such as the Hessian. We illustrate theoretical results using several examples, one of which analyses a problem arising in risk measurement and management in insurance and finance.

\bigskip

\noindent
{\it Key words and phrases:} risk assessment, risk management, nonconvexity, penalty function, Hessian, Weyl inequality.

\end{quote}

\newpage

\section{Introduction}
\label{introduction}

Solutions to a great variety of optimization problems rely on convexity (e.g., Boyd and Vandenberghe,~2004), but the underlying objects (e.g., functions, surfaces, etc.) may not always be such. Several techniques can be used to overcome the challenge. One of them is based on turning non-convex objects into convex ones, and a classical example would be turning data points (e.g., incomes) into the Lorenz curve, which is convex, and has been extensively used by econometricians to measure income inequality since the pioneering works of Lorenz (1905) and Gini (1912, 1914). In turn, these ideas have given rise to lift zonoids, convex hulls, and other convex objects (e.g., Mosler, 2002). In the theory of stochastic processes, convexifications of random walks have lead to multi-dimensional convex bodies (e.g., Davydov and Vershik, 1998), as well as to convex stochastic processes (e.g., Davydov and Zitikis, 2004). While dealing with such problems, a natural question arises: how far are the obtained convexifications from the original objects?

Another technique is based on working with non-convex objects directly, perhaps initially  modifying, extending, and generalizing some of the techniques developed for tackling convex cases (e.g., Mishra, 2011). In such non-convex scenarios, the underlying functions are still convex, or concave, over certain regions of their domains of definition. A natural question arises: how can we determine, and fairly quickly due to practical considerations, those regions that are convex or concave? The present paper offers answers to questions like these by providing a rigorous methodology for determining and quantifying convexity or, in a dual way, the lack of it. The method is computationally friendly and leads to numerical assessments even when closed-form solutions are difficult to derive.

We have organized the rest of the paper as follows. In Section \ref{results=1}, we present indices of lack-of-convexity, as well as their dual versions called indices of convexity. In the same section, we provide a simple example that illustrates the concept. Section \ref{results=2} contains fundamentals that concernt the lack of positive semidefiniteness in symmetric matrices. Section \ref{illustration} is devoted to a detailed analysis of a problem that has arisen in risk measurement and management. Section \ref{conclude} concludes the paper with a brief summary of main contributions.

\section{Indices of convexity}
\label{results=1}

We start with a simple yet illuminating introduction to the main idea. Let $h:(a,b)\to \mathbb{R}$ be a real-valued function on a bounded interval $(a,b) \subset \mathbb{R}$. Assume that $h$ is  differentiable, and thus it is convex on the interval $(a,b)$ if and only if its first derivative $h'$ is non-decreasing on the interval. This reduces our task to the assessment of how much the derivative is increasing, and for this, we employ the idea of Davydov and Zitikis (2017). Namely, assuming that $h'$ is differentiable, that is, the original function $h$ is an element of the space $C^2(a,b)$, the index of increase $\mathrm{INC}(h')$ of $h'$ or, in other words, the index of convexity $\mathrm{CONV}(h)$ of $h$ over the interval $(a,b)$ is
\begin{equation}\label{inc-0}
\mathrm{CONV}(h)=\mathrm{INC}(h')
:={\int_a^b (h''(x))_{+}\mathrm{d}x \over \int_a^b |h''(x)| \mathrm{d}x},
\end{equation}
where $(h''(x))_{+}=\max \{h''(x), 0\} $. Obviously, $\mathrm{CONV}(h)\in [0,1]$ for every function $h$, but if $h$ is convex on the interval $(a,b)$, then $\mathrm{CONV}(h)=1$, and if $h$ is concave, then $\mathrm{CONV}(h)=0$. This index has played a pivotal role in several applications, including financial and insurance risk management (Davydov and Zitikis, 2017), educational measurement (Chen and Zitikis, 2017), control systems assessment (Gribkova and Zitikis,~2018); we refer to Chen at al. (2018) for details and additional references on the topic. The application (details are in Section \ref{illustration}) that has inspired our present research concerns functions over multi-dimensional domains that require much more sophisticated considerations, which we describe next.

Let $h:G\to \mathbb{R}$ be a real-valued function from a bounded, open, and convex $d$-dimensional domain $G\subset \mathbb{R}^d$, for some $d\in \mathbb{N}$. Assume that the function is twice continuously differentiable, that is, $h\in C^2(G)$. Consequently, its  Hessian $H_h(\mathbf{x})$ exists at each point $\mathbf{x}=(x_1,\dots , x_d)\in G$ and is a symmetric $d\times d$ matrix. The function $h$ is convex on $G$ if and only if the Hessian $H_h(\mathbf{x})$ is positive semidefinite. Hence, the problem posited in the introduction can be viewed as an assessment problem of how much, if at all, the Hessian $H_h(\mathbf{x})$ deviates from being positive semidefinite. In view of this, we can define the index of lack of convexity of the function $h$ as a distance of the Hessian $H_h(\mathbf{x})$ from the set of all positive semidefinite, symmetric, $d\times d$ matrices. Obviously, the index is equal to $0$ whenever the function $h$ has no lack-of-convexity, that is, when the function is convex.

To proceed, we need additional notation. Let $\lambda_1(\mathbf{x}),\dots , \lambda_d(\mathbf{x})$  denote the eigenvalues of the Hessian $H_h(\mathbf{x})$. They are real because the Hessian is symmetric. We define their positive and negative parts by  $\lambda_i^{+}(\mathbf{x})=\max \{\lambda_i(\mathbf{x}), 0\} $ and $\lambda_i^{-}(\mathbf{x})=\max \{-\lambda_i(\mathbf{x}), 0\} $, respectively; for all $i=1,\dots , d$. Obviously, $\lambda_i(\mathbf{x})=\lambda_i^{+}(\mathbf{x})-\lambda_i^{-}(\mathbf{x})$ and
$|\lambda_i(\mathbf{x})|=\lambda_i^{+}(\mathbf{x})+\lambda_i^{-}(\mathbf{x})$.

\begin{definition}\label{def-0a}
The index of lack of convexity (LOC) of function $h$ at point $\mathbf{x}\in G$ is
\begin{equation}\label{loc-1}
\mathrm{LOC}(h,\mathbf{x})=\sum_{i=1}^d \lambda_i^{-}(\mathbf{x}) .
\end{equation}
\end{definition}

If the Hessian is positive semidefinite, then all its eigenvalues $\lambda_1(\mathbf{x}),\dots , \lambda_d(\mathbf{x})$  are non-negative, and thus $\mathrm{LOC}(h,\mathbf{x})=0$, which means ``zero lack of convexity.''  In other words, the function $h$ is convex at the point $x$. The value of $\mathrm{LOC}(h,\mathbf{x})$ never exceeds the nuclear (also known as the trace) norm $\Vert H_h(\mathbf{x}) \Vert_{*}=\sum_{i=1}^d |\lambda_i(\mathbf{x})|$ of the Hessian $H_h(\mathbf{x})$. This gives rise to our next definition.

\begin{definition}\label{def-0b}
The normalized LOC index of function $h$ at point $\mathbf{x}\in G$ is
\begin{equation}\label{nloc-1}
\mathrm{NLOC}(h,\mathbf{x})={\sum_{i=1}^d \lambda_i^{-}(\mathbf{x}) \over \sum_{i=1}^d |\lambda_i(\mathbf{x})|}.
\end{equation}
\end{definition}

It follows from the definition that $\mathrm{NLOC}(h,\mathbf{x})\in [0,1]$, and thus the index is normalized. Furthermore, since $\mathrm{NLOC}(h,\mathbf{x})=0$ means no lack of convexity (i.e., convexity) and $\mathrm{NLOC}(h,\mathbf{x})=1$ means total lack of convexity, we can use the quantity
$1-\mathrm{NLOC}(h,\mathbf{x})$ to measure convexity: it takes value $0$ in the case of total lack of convexity and value $1$ in the case of pure convexity. We have arrived at our third definition.

\begin{definition}\label{def-0c}
The index of convexity of function $h$ at point $\mathbf{x}\in G$ is
\begin{equation}\label{c-1}
\mathrm{CONV}(h,\mathbf{x})={\sum_{i=1}^d \lambda_i^{+}(\mathbf{x}) \over \sum_{i=1}^d |\lambda_i(\mathbf{x})|}.
\end{equation}
\end{definition}

Starting from the above introduced pointwise indices of convexity, or lack of it, we can create a variety of global indices of convexity of the function $h$ over the domain $G$. For example, the classical $L_1$-norm leads to the following global index of convexity
\begin{equation}\label{c-1b}
\mathrm{CONV}_1(h,G)
={\int_G \sum_{i=1}^d \lambda_i^{+}(\mathbf{x})\mathrm{d}\mathbf{x} \over \int_G \sum_{i=1}^d |\lambda_i(\mathbf{x})|\mathrm{d}\mathbf{x}}.
\end{equation}
The index is always in the interval $[0,1]$, takes value $0$ when the function $h$ is not convex at any point $\mathbf{x}\in G$, and takes value $1$ when it is convex at every $\mathbf{x}\in G$.

Certainly, applications may suggest using other definitions of global indices, perhaps using more complex functional norms such as that of the Lebesgue space $L_p(G,w,\mathrm{d}\mu)$ with various choices of the weight function $w$ and positive measure $\mu $. Nevertheless, for the sake of transparency, throughout the paper we use the $L_1$-norm and thus drop the subindex ``$1$'' from $\mathrm{CONV}_1(h,G)$ to simplify the notation and increase readability. Next is an illustrative example of the index $\mathrm{CONV}(h,G)$ based on a very basic yet instructive example of $h$.

\begin{example}\label{example-1}\rm
Consider the function of two separable arguments
\[
h_{\cos}(x,y)=-\cos x -\cos y
\]
on the square
\[
S_{0,0}(a)=(-a,a)\times (-a,a),
\]
which is centered at $(0,0)$ and parameterized by $a\in (0,\infty )$ that we shall vary when exploring the convexity of the function $h_{\cos}$ over $S_{0,0}(a)$. The function is depicted in Figure~\ref{graphs-0}.
\begin{figure}[h!]
\centering
\subfigure[Function $h_{\cos}(x,y)$ on the square $S_{0,0}(4)$.]{%3\pi/2
\includegraphics[width=0.5\textwidth]{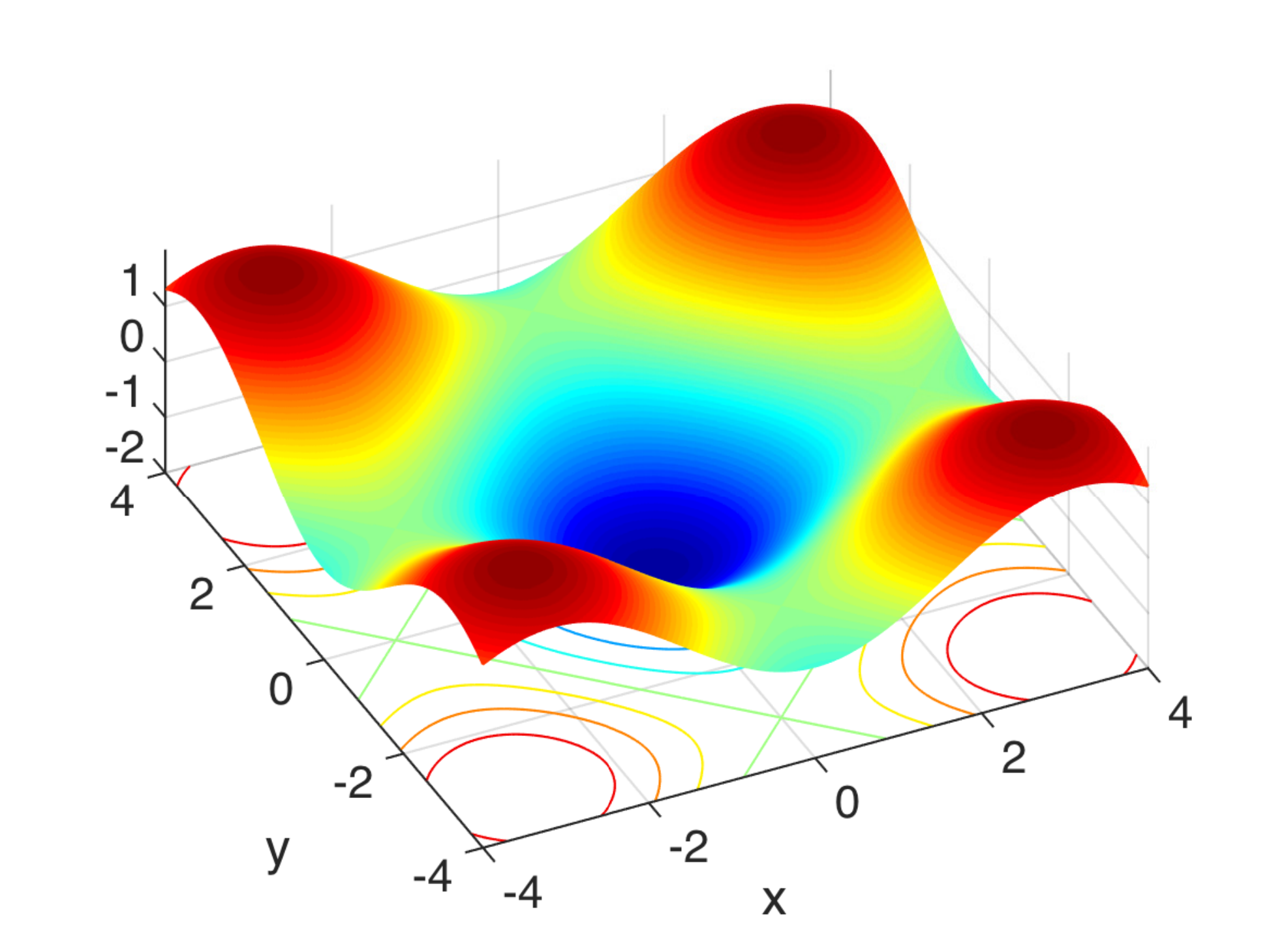}}\qquad
\subfigure[The Hessian is positive semidefinite only in the centre square.]{
\includegraphics[width=0.4\textwidth]{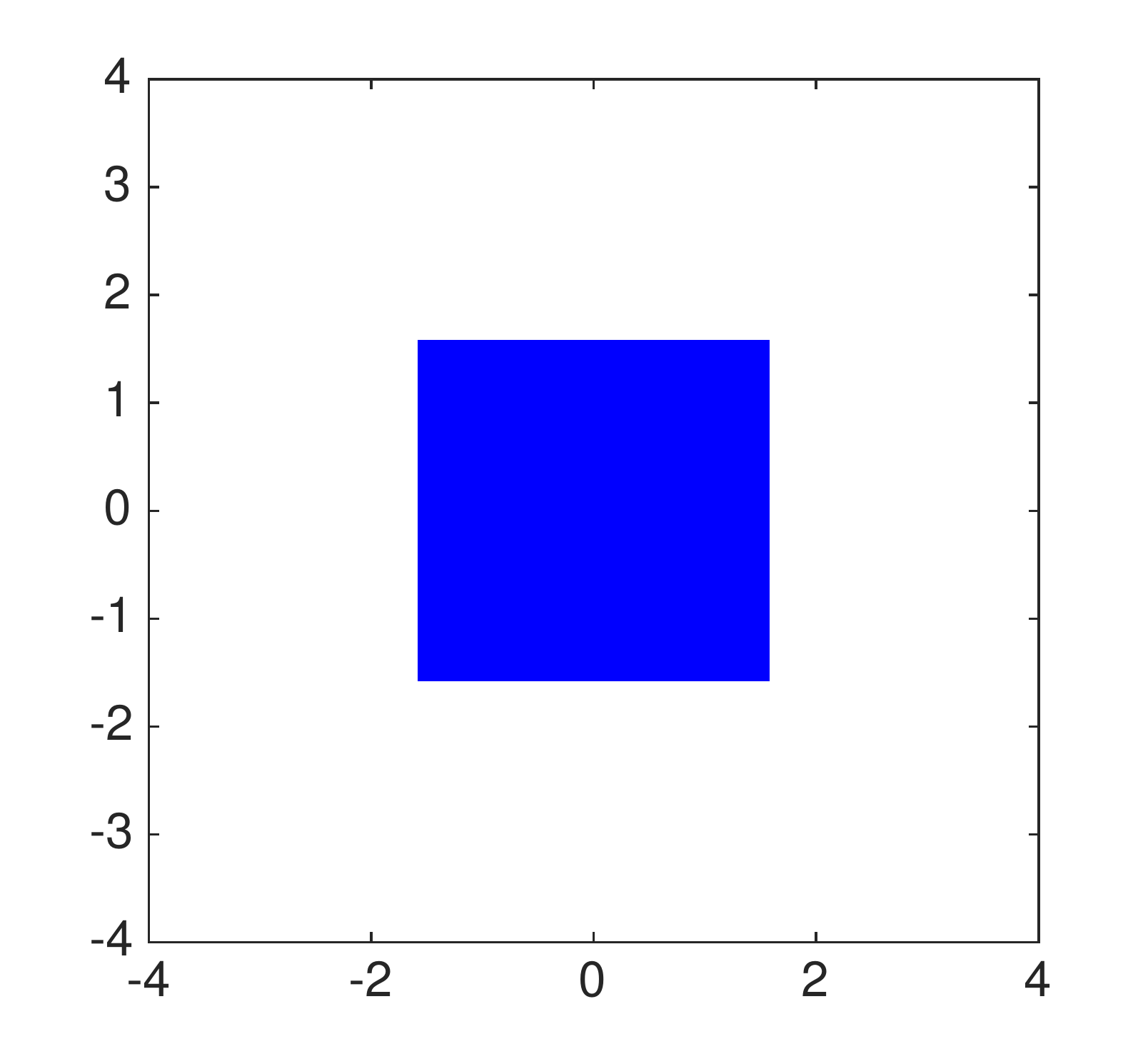}}\\
\subfigure[Pointwise convexity index for $(x,y)\in S_{0,0}(4)$.]{%3\pi/2
\includegraphics[width=0.5\textwidth]{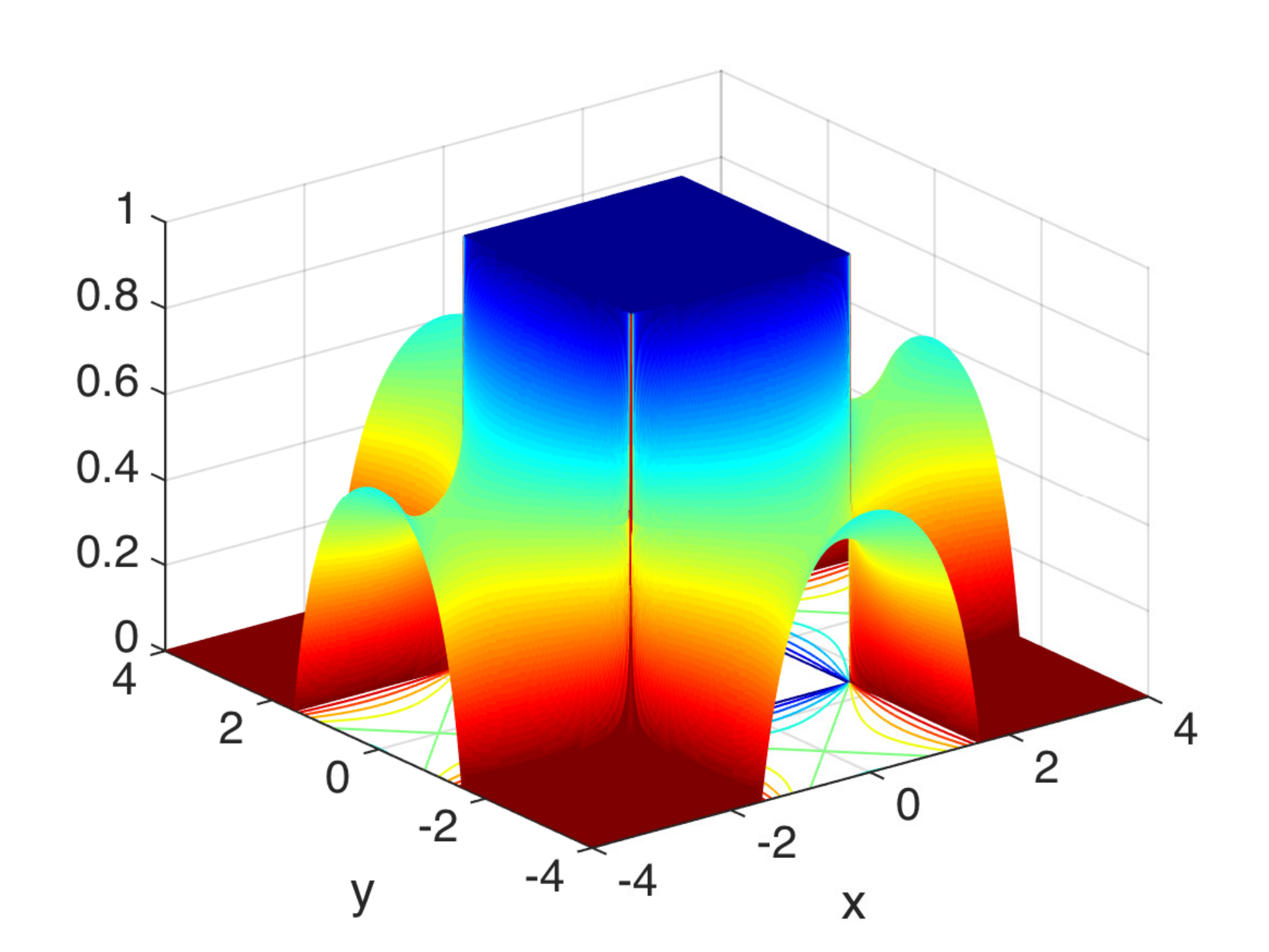}}\qquad
\subfigure[Global index $\mathrm{CONV}(a)$ for $0\le a \le 20$.]{
\includegraphics[width=0.43\textwidth]{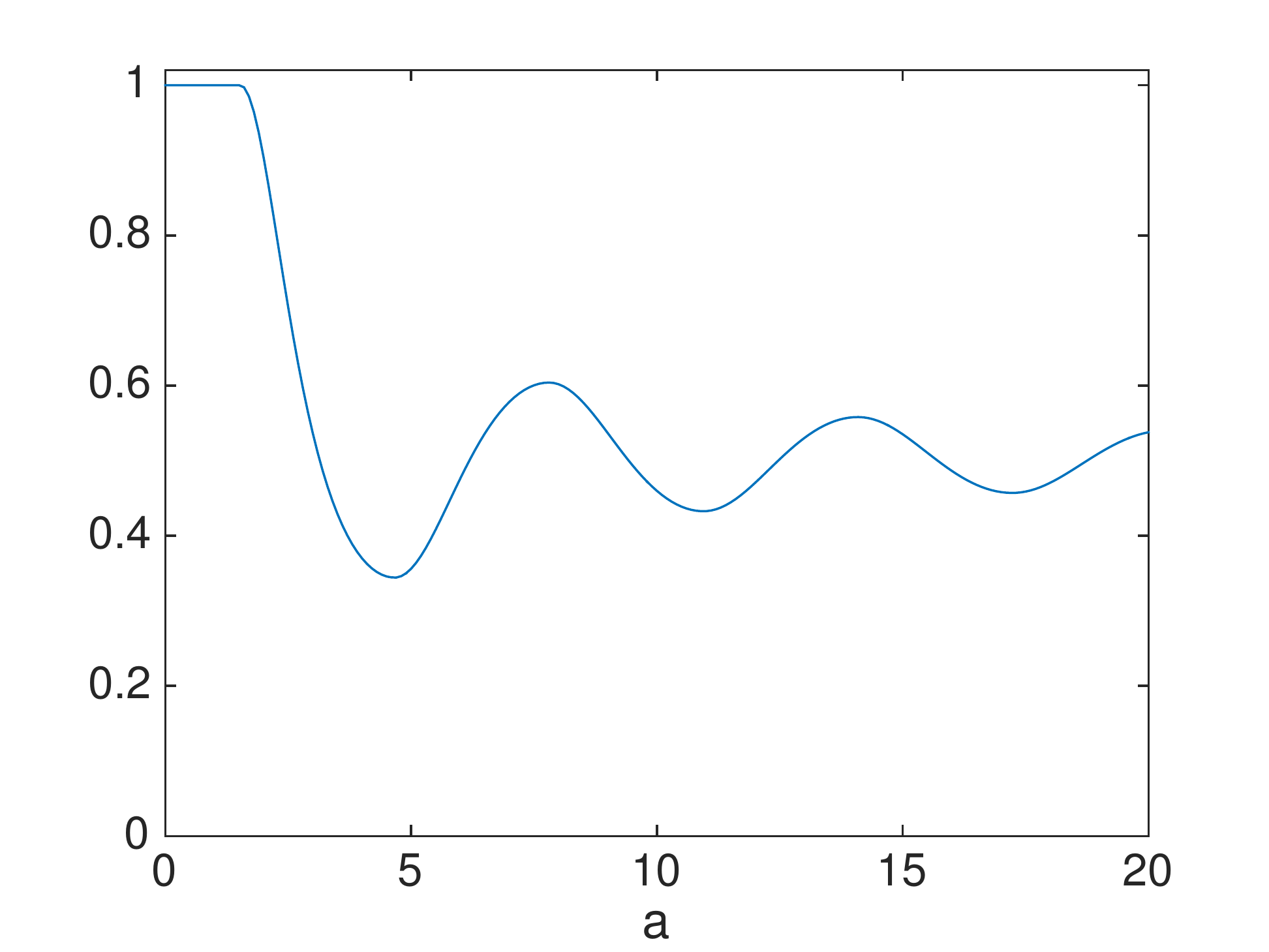}}
\caption{Convexity exploration of $h_{\cos}$ on the square $S_{0,0}(a)$ for various $a$ values.}
\label{graphs-0}
\end{figure}
Its Hessian is the diagonal matrix whose diagonal entries are the eigenvalues $\lambda_1(x,y)=\cos x $ and $\lambda_2(x,y)=\cos y $. Consequently, the pointwise convexity index is
\[
\mathrm{CONV}(h_{\cos},(x,y))={(\cos x )^{+}+(\cos y )^{+} \over |\cos x |+|\cos y |} ,
\]
and the $L_1$-based global index over the square $S_{0,0}(a)$ is
\begin{align*}
\mathrm{CONV}(a)
:=&\mathrm{CONV}(h_{\cos},S_{0,0}(a))
\\
=& {\int_{-a}^a \int_{-a}^a \big((\cos x )^{+}+(\cos y )^{+}\big)\mathrm{d}x\mathrm{d}y \over \int_{-a}^a \int_{-a}^a \big(|\cos x |+|\cos y |\big)\mathrm{d}x\mathrm{d}y  }
\\
=&{\int_0^a(\cos x )^{+}\mathrm{d}x \over \int_0^a |\cos x |\mathrm{d}x}.
\end{align*}
The pointwise and global indices are depicted in Figure~\ref{graphs-0}. The index $\mathrm{CONV}(a)$ is equal to $1$ for every $a \in (0, \pi/2]$, thus implying convexity of the function on the square $S_{0,0}(a)$ for the noted $a$ values, but when $a>\pi/2$, the amount of convexity decreases from $1$ (pure convexity) to a lower, though fluctuating, level of convexity, depending on the nature of the region being absorbed by the square $S_{0,0}(a)$ when the parameter $a$ grows. This concludes Example \ref{example-1}.
\end{example}

In the next section we show how the aforementioned indices arise from an minimization problem in the space of matrices. The argument concerns general symmetric  matrices, and it can readily be specialized to the aforementioned Hessian.

\section{Indices of positive semidefiniteness}
\label{results=2}

Let $H$ be a real symmetric $d\times d$ matrix. We rewrite it as
$H =Q \Lambda Q^{\top} $, where $Q $ is an orthogonal matrix and $\Lambda =\mathrm{diag}(\lambda_1 ,\dots , \lambda_d )$ is the diagonal matrix with $\lambda_i:= \lambda_i(H)$  denoting the eigenvalues of the matrix $H $. Define $\lambda_i^{+} =\max \{\lambda_i , 0\} $ and $\lambda_i^{-} =\max \{-\lambda_i , 0\} $. With the diagonal matrices $\Lambda^{+} :=\mathrm{diag}(\lambda^{+}_1 ,\dots , \lambda^{+}_d )$ and $\Lambda^{-} :=\mathrm{diag}(\lambda^{-}_1 ,\dots , \lambda^{-}_d )$, we obtain what we call the canonical decomposition
\begin{equation}\label{diag-1}
H =H^{+} -H^{-}
\end{equation}
of the matrix $H$, where $H^{+} =Q \Lambda^{+} Q^{\top} $ and $H^{-} =Q \Lambda^{-} Q^{\top} $. The matrices $H^{+} $ and $H^{-} $ are positive semidefinite. Note also that since the nuclear norm of the matrix $H$ is
\[
 \Vert H  \Vert_{*} =\sum_{i=1}^d |\lambda_i |,
\]
we have the equation $\Vert H  \Vert_{*}=\Vert H^{+}  \Vert_{*}+\Vert H^{-}  \Vert_{*}$.

Of course, there can be many decompositions of $H $ as the difference of two positive semidefinite matrices $H_1$ and $H_2$, but the canonical decomposition is a very special one, whose minimalist nature is elucidated in the following lemma.

\begin{lemma}\label{lemma-1}
If $H=H_1-H_2 $ for any pair of symmetric and positive semidefinite matrices $ H_1 $ and $ H_2 $, then $\mathrm{tr}(H^{+})\le \mathrm{tr}(H_1)$ and also $\mathrm{tr}(H^{-})\le \mathrm{tr}(H_2)$.
\end{lemma}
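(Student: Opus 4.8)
The plan is to establish the first inequality $\mathrm{tr}(H^{+})\le \mathrm{tr}(H_1)$ and then deduce the second one from it. First I would note that the two inequalities are in fact equivalent: taking traces in $H=H_1-H_2$ and in the canonical decomposition $H=H^{+}-H^{-}$ gives $\mathrm{tr}(H^{+})-\mathrm{tr}(H^{-})=\mathrm{tr}(H_1)-\mathrm{tr}(H_2)$, so $\mathrm{tr}(H^{+})\le \mathrm{tr}(H_1)$ holds if and only if $\mathrm{tr}(H^{-})\le \mathrm{tr}(H_2)$. (Equivalently, the second inequality is the first one applied to $-H=H_2-H_1$, since $(-H)^{+}=H^{-}$.) Either way, it suffices to bound $\mathrm{tr}(H^{+})$.

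To do that, let $q_1,\dots,q_d$ be the columns of $Q$ and let $P=\sum_{i:\lambda_i>0}q_iq_i^{\top}$ be the orthogonal projection onto the span of the eigenvectors of $H$ associated with its strictly positive eigenvalues. From $H=Q\Lambda Q^{\top}$ one checks directly that $PHP=Q\Lambda^{+}Q^{\top}=H^{+}$, hence $\mathrm{tr}(H^{+})=\mathrm{tr}(PHP)$. Substituting $H=H_1-H_2$ gives
\[
\mathrm{tr}(H^{+})=\mathrm{tr}(PH_1P)-\mathrm{tr}(PH_2P).
\]
Since $H_2$ is positive semidefinite and $P$ is symmetric, $PH_2P$ is positive semidefinite, so $\mathrm{tr}(PH_2P)\ge 0$, whence $\mathrm{tr}(H^{+})\le \mathrm{tr}(PH_1P)$. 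Finally, writing $I=P+(I-P)$ with $I-P$ again an orthogonal projection,
\[
\mathrm{tr}(H_1)=\mathrm{tr}(PH_1P)+\mathrm{tr}\big((I-P)H_1(I-P)\big)\ge \mathrm{tr}(PH_1P),
\]
because the compression of the positive semidefinite matrix $H_1$ by a projection is positive semidefinite and hence has nonnegative trace. Chaining the two bounds yields $\mathrm{tr}(H^{+})\le \mathrm{tr}(H_1)$, and the second inequality follows as explained above.

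The argument is short, and the only point that needs care is the elementary fact that for symmetric $P$ and positive semidefinite $M$ one has $\mathrm{tr}(PMP)=\mathrm{tr}(MP^{2})\ge 0$ (indeed $\langle v,PMPv\rangle=\langle Pv,MPv\rangle\ge 0$ for every $v$), together with the identity $PHP=H^{+}$; I do not anticipate a genuine obstacle. A fully equivalent route, if one prefers not to introduce $P$, is to invoke the variational formula $\mathrm{tr}(H^{+})=\max\{\mathrm{tr}(HX):0\preceq X\preceq I\}$ and observe that for any such $X$, $\mathrm{tr}(HX)=\mathrm{tr}(H_1X)-\mathrm{tr}(H_2X)\le \mathrm{tr}(H_1X)\le \mathrm{tr}(H_1)$, since $\mathrm{tr}(H_2X)\ge 0$ and $\mathrm{tr}\big(H_1(I-X)\big)\ge 0$; the maximum being attained at $X=P$ gives the same conclusion.
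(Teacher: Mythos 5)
Your proof is correct, and it takes a genuinely different route from the paper's. The paper makes the same initial reduction you do (the two bounds are equivalent because $H_1-H_2=H^{+}-H^{-}$, so it suffices to prove $\mathrm{tr}(H^{+})\le\mathrm{tr}(H_1)$), but then it writes $H_1=H+H_2$ and invokes Weyl's monotonicity theorem to get the eigenvalue-by-eigenvalue bound $\lambda_i^{\downarrow}(H_1)\ge\lambda_i^{\downarrow}(H)$ for every $i$; combining this with the nonnegativity of the eigenvalues of $H_1$ gives $\lambda_i^{\downarrow}(H_1)\ge\max\{\lambda_i^{\downarrow}(H),0\}$, and summing over $i$ yields the trace inequality. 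Your compression argument --- $\mathrm{tr}(H^{+})=\mathrm{tr}(PHP)=\mathrm{tr}(PH_1P)-\mathrm{tr}(PH_2P)\le\mathrm{tr}(PH_1P)\le\mathrm{tr}(H_1)$, using only that the compression of a positive semidefinite matrix by a symmetric projection has nonnegative trace --- is more elementary and self-contained, since it needs no external eigenvalue-perturbation theorem; all steps ($PHP=H^{+}$, $\mathrm{tr}(PMP)\ge 0$, and the splitting $\mathrm{tr}(H_1)=\mathrm{tr}(PH_1P)+\mathrm{tr}((I-P)H_1(I-P))$ via $P^2=P$) check out. What the paper's route buys in exchange for citing Weyl is a strictly stronger intermediate conclusion: a termwise domination of the ordered eigenvalues of $H_1$ over the positive parts of those of $H$, not merely the inequality of their sums. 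Your variational-formula alternative, $\mathrm{tr}(H^{+})=\max\{\mathrm{tr}(HX):0\preceq X\preceq I\}$, is essentially the same compression argument with $X=P$ as the optimizer, so it adds robustness but not a third distinct idea.
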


\begin{proof}
Since $H_1-H_2=H^{+}-H^{-}$, the two bounds in the formulation of Lemma \ref{lemma-1} are equivalent. Hence, it suffices to prove only one of them, say $\mathrm{tr}(H^{+})\le \mathrm{tr}(H_1)$, which is equivalent to
\begin{equation}\label{eigen-1}
\sum_{i=1}^d \lambda_i^{+}(H) \le \sum_{i=1}^d \lambda_i(H_1)
\end{equation}
because $\mathrm{tr}(H^{+})=\mathrm{tr}(Q\Lambda^{+}Q^{\top})=\mathrm{tr}(\Lambda^{+})$.
Next we write $\sum_{i=1}^d \lambda_i(H_1)=\sum_{i=1}^d \lambda_i^{\downarrow}(H_1)$, where $\lambda_1^{\downarrow}(H_1), \dots , \lambda_d^{\downarrow}(H_1)$ denote the eigenvalues $\lambda_1(H_1), \dots , \lambda_d(H_1)$ of the matrix $H_1$ arranged in the descending order, that is, $\lambda_1^{\downarrow}(H_1)\ge \dots \ge \lambda_d^{\downarrow}(H_1)$. Next we write $\lambda_i^{\downarrow}(H_1)=\lambda_i^{\downarrow}(H+H_2)$, and since $H$ is symmetric and $H_2$ is positive semidefinite, we have from Weyl's (1912) monotonicity theorem (e.g., Bhatia, 1997; Corollary III.2.3, p.~63) that, for every $i=1,\dots , d$, the bound  $\lambda_i^{\downarrow}(H+H_2)\ge \lambda_i^{\downarrow}(H)$ holds, which is of course equivalent to the bound
\begin{equation}\label{eigen-10}
\lambda_i^{\downarrow}(H_1)\ge \lambda_i^{\downarrow}(H).
\end{equation}
Since $H_1$ is positive semidefinite, all its eigenvalues $\lambda_1^{\downarrow}(H_1), \dots , \lambda_d^{\downarrow}(H_1)$ are non-negative. Since the matrix $H$ is just symmetric, its eigenvalues $\lambda_1^{\downarrow}(H), \dots , \lambda_d^{\downarrow}(H)$ may or may not be non-negative. Hence, from bound (\ref{eigen-10}) we have
\[
\lambda_i^{\downarrow}(H_1)\ge \max\{\lambda_i^{\downarrow}(H),0\},
\]
which holds for all $i=1,\dots , d$. Consequently, we have
\begin{align*}
\sum_{i=1}^d \lambda_i(H_1)=\sum_{i=1}^d \lambda_i^{\downarrow}(H_1)
&\ge \sum_{i=1}^d \max\{\lambda_i^{\downarrow}(H),0\}
\\
&= \sum_{i=1}^d \max\{\lambda_i(H),0\}
=\sum_{i=1}^d \lambda_i^{+}(H),
\end{align*}
which establishes bound (\ref{eigen-1}). The proof of Lemma \ref{lemma-1} is finished.
\end{proof}

The above lemma plays a pivotal role when establishing a geometric interpretation of the index $\mathrm{LOC}(h,\mathbf{x})$:  when specialized to the Hessian $H(\mathbf{x})$, it follows from the next theorem that $\mathrm{LOC}(h,\mathbf{x})$ is the minimal nuclear-distance of the Hessian from the space $\mathcal{M}^{+}$ of all symmetric, positive semidefinite, $d\times d$ matrices.

\begin{theorem}\label{def-1}
For every  symmetric, $d\times d$ matrix $H$, we have
\begin{equation}\label{def-2}
\inf_{M\in \mathcal{M}^{+}}\Vert H -M \Vert_{*} =\Vert H^{-} \Vert_{*}.
\end{equation}
Consequently, the following quantities
\begin{align*}
\mathrm{LOPS}(H)&=\Vert H^{-} \Vert_{*},
\\
\mathrm{NLOPS}(H)&={\Vert H^{-} \Vert_{*} \over \Vert H \Vert_{*}},
\\
\mathrm{PS}(H)&={\Vert H^{+} \Vert_{*} \over \Vert H \Vert_{*}}
\end{align*}
define, respectively, the index of lack of positive semidefiniteness (LOPS), its normalized version (NLOPS), and the index of positive semidefiniteness (PS).
\end{theorem}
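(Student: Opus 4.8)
The plan is to prove the identity \eqref{def-2}, since the three indices are then just a matter of naming. I will establish the equality by proving the two inequalities separately.

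For the upper bound $\inf_{M\in\mathcal{M}^{+}}\Vert H-M\Vert_{*}\le \Vert H^{-}\Vert_{*}$, I would simply exhibit a witness: take $M=H^{+}$. This is symmetric and positive semidefinite by construction (it equals $Q\Lambda^{+}Q^{\top}$ with $\Lambda^{+}\ge 0$), and by the canonical decomposition \eqref{diag-1} we have $H-H^{+}=-H^{-}$, so $\Vert H-H^{+}\Vert_{*}=\Vert H^{-}\Vert_{*}$. Hence the infimum is at most $\Vert H^{-}\Vert_{*}$, and in fact it is attained.

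For the lower bound $\Vert H-M\Vert_{*}\ge \Vert H^{-}\Vert_{*}$ for every $M\in\mathcal{M}^{+}$, this is where Lemma \ref{lemma-1} does the work. The idea is: given $M\in\mathcal{M}^{+}$, write $N:=M-H$, which is symmetric. Applying the canonical decomposition to $N$ gives $N=N^{+}-N^{-}$, so $H=M-N = (M+N^{-})-N^{+}$, and here $M+N^{-}$ and $N^{+}$ are both symmetric and positive semidefinite. Thus $H=H_1-H_2$ with $H_1=M+N^{-}$ and $H_2=N^{+}$, and Lemma \ref{lemma-1} yields $\operatorname{tr}(H^{-})\le \operatorname{tr}(H_2)=\operatorname{tr}(N^{+})$. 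Now $\Vert H-M\Vert_{*}=\Vert N\Vert_{*}=\operatorname{tr}(N^{+})+\operatorname{tr}(N^{-})\ge \operatorname{tr}(N^{+})\ge \operatorname{tr}(H^{-})=\Vert H^{-}\Vert_{*}$, using that for a symmetric matrix the nuclear norm equals $\sum|\lambda_i|=\sum\lambda_i^{+}+\sum\lambda_i^{-}=\operatorname{tr}(N^{+})+\operatorname{tr}(N^{-})$, and similarly $\Vert H^{-}\Vert_{*}=\operatorname{tr}(H^{-})$ since $H^{-}$ is positive semidefinite. Combining the two bounds gives \eqref{def-2}.

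I do not anticipate a serious obstacle here; the main point is choosing the right decomposition to feed into Lemma \ref{lemma-1}, namely applying the canonical decomposition to the auxiliary matrix $M-H$ rather than to $H$ itself, so that the ``$H_2$'' playing the role in the lemma is exactly $(M-H)^{+}$, whose trace is bounded below by $\Vert H-M\Vert_{*}$ only after discarding the nonnegative term $\operatorname{tr}((M-H)^{-})$. One should also remark, for completeness, that the infimum is actually a minimum, attained at $M=H^{+}$, which is worth stating explicitly since it justifies the geometric interpretation of $\mathrm{LOC}(h,\mathbf{x})$ promised just before the theorem. The final sentence of the statement, introducing $\mathrm{LOPS}$, $\mathrm{NLOPS}$, and $\mathrm{PS}$, requires no proof beyond noting that $\Vert H\Vert_{*}=\Vert H^{+}\Vert_{*}+\Vert H^{-}\Vert_{*}$ (already recorded in the excerpt), so that $\mathrm{NLOPS}(H)+\mathrm{PS}(H)=1$ and both lie in $[0,1]$.
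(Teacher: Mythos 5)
Your proof is correct, and it takes a noticeably cleaner route to the lower bound than the paper does, while relying on the same key tool (Lemma \ref{lemma-1}) and the same witness $M=H^{+}$ for the upper bound. The paper's argument first invokes closedness and finite-dimensionality of $\mathcal{M}^{+}$ to produce a minimizer $D$, then shows that the optimal residual $E=H-D$ satisfies $E^{+}=0$ (so that $-E\in\mathcal{M}^{+}$), and only then applies Lemma \ref{lemma-1} to the resulting decomposition $H=D-(-E)$. You bypass the existence-of-a-minimizer step entirely: for an \emph{arbitrary} $M\in\mathcal{M}^{+}$ you canonically decompose $N=M-H$ and feed $H=(M+N^{-})-N^{+}$ into Lemma \ref{lemma-1}, obtaining $\Vert H-M\Vert_{*}=\operatorname{tr}(N^{+})+\operatorname{tr}(N^{-})\ge\operatorname{tr}(N^{+})\ge\operatorname{tr}(H^{-})=\Vert H^{-}\Vert_{*}$ directly. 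This buys you two things: the topological argument about the infimum being attained becomes unnecessary for the identity itself (attainment is then a free corollary from the witness $H^{+}$), and the proof would survive in settings where a minimizer is not guaranteed a priori. What the paper's version buys in exchange is a structural fact about the optimum, namely that any minimizer $D$ satisfies $D-H\in\mathcal{M}^{+}$, which your argument does not exhibit; that fact is not needed for the theorem, so your streamlining loses nothing that the statement requires. All individual steps check out: $M+N^{-}$ is positive semidefinite as a sum of positive semidefinite matrices, $\Vert H-M\Vert_{*}=\Vert N\Vert_{*}$ since the nuclear norm is invariant under sign change, and $\Vert H^{-}\Vert_{*}=\operatorname{tr}(H^{-})$ because $H^{-}$ is positive semidefinite.
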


\begin{proof}
We begin with the observation that since the space $\mathcal{M}^{+}$ is finite-dimensional and closed, we can find an element $D\in \mathcal{M}^{+}$ such that
\begin{equation}\label{loc-2}
\inf_{M\in \mathcal{M}^{+}}\Vert H-M \Vert_{*} = \Vert H-D \Vert_{*}.
\end{equation}
Denote $E:=H-D$ and write its canonical decomposition $E=E^{+}-E^{-}$. Since $H=D+E$, we have $H=D+E^{+}-E^{-}$ and thus $H-(D+E^{+})=-E^{-}$. Due to the latter equation, we have
\begin{align}
\Vert H-(D+E^{+}) \Vert_{*}
&= \Vert E^{-}\Vert_{*}
\notag
\\
&\le \Vert E \Vert_{*}
= \inf_{M\in \mathcal{M}^{+}}\Vert H-M \Vert_{*},
\label{eq-10}
\end{align}
where the right-most equation holds because $E=H-D$. Since $D+E^{+} \in \mathcal{M}^{+}$, inequality (\ref{eq-10}) turns into equality, and thus we have $\Vert E^{-}\Vert_{*}= \Vert E \Vert_{*} $. This equation implies $\Vert E^{+}\Vert_{*}=0 $ because $\Vert E \Vert_{*}=\Vert E^{+} \Vert_{*}+\Vert E^{-} \Vert_{*}$, and we therefore conclude that $E^{+}=0$. Consequently, we have $E=-E^{-}$, which in turn implies $-E\in \mathcal{M}^{+}$ because $E^{-}\in \mathcal{M}^{+}$. Since  $D\in \mathcal{M}^{+}$ and $-E\in \mathcal{M}^{+}$, the equation $H=D+E$ written as $H=D-(-E)$ gives us an alternative (which may or may not be canonical) decomposition of $H$ into the difference of two positive semidefinite, symmetric, $d\times d$ matrices.

Hence, in summary, we have two decompositions of the matrix $H$ into the difference of positive semidefinite, symmetric, $d\times d$ matrices: the \textit{canonical} decomposition $ H=H^{+}-H^{-} $ and the \textit{alternative}  decomposition $ H=H_1-H_2 $ with $ H_1, H_2 \in \mathcal{M}^{+}$. We have $\mathrm{tr}(H^{-})=\Vert H^{-} \Vert_{*} $. Next, upon recalling that $H_2=-E$, we have $\mathrm{tr}(H_2)=\mathrm{tr}(-E)=\Vert H-D \Vert_{*} $. From these observations and the bound $\mathrm{tr}(H^{-})\le \mathrm{tr}(H_2)$ (Lemma \ref{lemma-1}), we conclude that
\begin{equation}\label{eq-10a}
\Vert H^{-} \Vert_{*}\le \Vert H-D \Vert_{*} .
\end{equation}
But $H^{-} =H-H^{+} $ and also $H^{+}\in \mathcal{M}^{+}$. Hence, inequality (\ref{eq-10a}) turns into equality, which in turn implies
\begin{equation}\label{eq-11}
\Vert H-D \Vert_{*} =\Vert H-H^{+} \Vert_{*} .
\end{equation}
Since $\Vert H-H^{+} \Vert_{*} $ is equal to $ \Vert H^{-} \Vert_{*}$, equation (\ref{eq-11}) implies statement (\ref{def-2}) and in this way concludes the proof of Theorem \ref{def-1}.
\end{proof}

We are now in the position to give additional insight into Definitions \ref{def-0a}--\ref{def-0c} by deriving alternative representations of the indices of (lack of) convexity introduced in Section~\ref{results=1}.

\begin{corollary}\label{cor-1}
For every $h\in C^2(G)$, we have the equations
\begin{gather*}
\mathrm{LOC}(h,\mathbf{x})=\Vert H_h^{-}(\mathbf{x}) \Vert_{*},
\\
\mathrm{NLOC}(h,\mathbf{x})={\Vert H_h^{-}(\mathbf{x}) \Vert_{*} \over \Vert H_h(\mathbf{x}) \Vert_{*}},
\\
\mathrm{CONV}(h,\mathbf{x})={\Vert H_h^{+}(\mathbf{x}) \Vert_{*} \over \Vert H_h(\mathbf{x}) \Vert_{*}},
\end{gather*}
which are equivalent to equations (\ref{loc-1}), (\ref{nloc-1}) and (\ref{c-1}), respectively.
\end{corollary}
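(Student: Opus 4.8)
The plan is to recognize that Corollary~\ref{cor-1} is essentially a dictionary-translation exercise: the pointwise indices $\mathrm{LOC}(h,\mathbf{x})$, $\mathrm{NLOC}(h,\mathbf{x})$, $\mathrm{CONV}(h,\mathbf{x})$ were defined in Section~\ref{results=1} via the eigenvalues $\lambda_1(\mathbf{x}),\dots,\lambda_d(\mathbf{x})$ of the Hessian $H_h(\mathbf{x})$, while Theorem~\ref{def-1} provides the nuclear-norm expressions for a general symmetric matrix $H$. So the entire content of the corollary is to apply Theorem~\ref{def-1} (and the surrounding definitions of $H^+$, $H^-$) with the specialization $H = H_h(\mathbf{x})$, which is legitimate because $h\in C^2(G)$ guarantees that the Hessian exists at every $\mathbf{x}\in G$ and is a symmetric $d\times d$ matrix.

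First I would fix an arbitrary $\mathbf{x}\in G$ and write down the canonical decomposition $H_h(\mathbf{x}) = H_h^+(\mathbf{x}) - H_h^-(\mathbf{x})$ exactly as in display~(\ref{diag-1}), where $H_h^+(\mathbf{x}) = Q\Lambda^+ Q^\top$ and $H_h^-(\mathbf{x}) = Q\Lambda^- Q^\top$ with $\Lambda^\pm = \mathrm{diag}(\lambda_1^\pm(\mathbf{x}),\dots,\lambda_d^\pm(\mathbf{x}))$. Then I would invoke the formula $\Vert H \Vert_* = \sum_{i=1}^d |\lambda_i|$ for the nuclear norm, which gives immediately $\Vert H_h^-(\mathbf{x})\Vert_* = \sum_{i=1}^d \lambda_i^-(\mathbf{x})$ (since the eigenvalues of $H_h^-(\mathbf{x})$ are $\lambda_1^-(\mathbf{x}),\dots,\lambda_d^-(\mathbf{x})$, all non-negative), $\Vert H_h^+(\mathbf{x})\Vert_* = \sum_{i=1}^d \lambda_i^+(\mathbf{x})$, and $\Vert H_h(\mathbf{x})\Vert_* = \sum_{i=1}^d |\lambda_i(\mathbf{x})|$. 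Comparing these three identities with Definitions~\ref{def-0a}, \ref{def-0b} and \ref{def-0c} — namely (\ref{loc-1}), (\ref{nloc-1}) and (\ref{c-1}) — yields the three claimed equations directly, with no genuine inequality to establish.

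There is no real obstacle here; the only subtlety worth a sentence is that one should note the orthogonal-invariance of the nuclear norm (or, equivalently, that the nuclear norm depends only on the eigenvalues/singular values, which for a symmetric matrix are the absolute values of the eigenvalues) so that passing from $Q\Lambda^\pm Q^\top$ to $\sum_i \lambda_i^\pm(\mathbf{x})$ is justified. One might also point out for emphasis that Theorem~\ref{def-1} then upgrades these bookkeeping identities into the geometric statement that $\mathrm{LOC}(h,\mathbf{x}) = \Vert H_h^-(\mathbf{x})\Vert_*$ equals the minimal nuclear-distance $\inf_{M\in\mathcal{M}^+}\Vert H_h(\mathbf{x}) - M\Vert_*$ of the Hessian from the cone of positive semidefinite symmetric matrices — but this last remark is interpretive and not needed for the proof of the displayed equations themselves. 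In short, the proof is three lines: specialize, compute nuclear norms via eigenvalues, and match against the original definitions.
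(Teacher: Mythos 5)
Your proposal is correct and matches the paper's (implicit) reasoning: the paper offers no separate proof of Corollary~\ref{cor-1}, treating it as an immediate consequence of specializing the canonical decomposition and the nuclear-norm identity $\Vert H\Vert_*=\sum_{i=1}^d|\lambda_i|$ to $H=H_h(\mathbf{x})$ and comparing with Definitions~\ref{def-0a}--\ref{def-0c}. Your three-line argument is exactly that computation, and the remark about orthogonal invariance of the nuclear norm is the right (and only) justification needed.
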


\section{An illustration}
\label{illustration}

The notions and quantities that we shall use in this section are standard in the literature on risk measurement and management (e.g., McNeil, Frey and Embrechts,~2015). Nevertheless, to make the text more self-contained and readable, we shall recall some of the standard definitions, such as those of the value at risk, expected shortfall, and a few other ones.

\subsection{Description}
\label{problem}

Consider a company with $d$ business lines, $i=1,\dots , d$. Running business is costly and risky. For each $i=1,\dots , d$, let $x_i$ be the (non-random) amount of risk capital allocated to the $i^{\textrm{th}}$ business line in order to cover its loss $X_i$, which is not known beforehand and thus a random variable. Denote the cumulative distribution function (cdf) of $X_i$ by $F_i$.

The expected loss not covered by the allocated capital $x_i$ is $\mathbf{E}[(X_i-x_i)_{+}]$, where $(X_i-x_i)_{+}$ is $X_i-x_i$ when $X_i>x_i$, and $0$ otherwise. Obviously, if $x_i$ is very large, then $\mathbf{E}[(X_i-x_i)_{+}]$ is very small, but keeping capital `frozen' is costly. Let $\ell_i(x_i)$ be the loss, or penalty, associated with the $i^{\textrm{th}}$ business line for keeping the capital $x_i$. Hence, the total loss associated with the $i^{\textrm{th}}$ business line is
\[
h_i(x_i)=\mathbf{E}[(X_i-x_i)_{+}]+\ell_i(x_i).
\]
It is natural to assume that $\ell_i$ is a non-decreasing function taking non-negative values. For example, let
\begin{equation}\label{loss-1}
\ell_i(x_i)=(1-p_i)x_i
\end{equation}
for some $p_i\in (0,1)$. Referring to insurance, a practically relevant value would be $p_i=0.99$. The function $h_i$ achieves its minimum at the point $x_i^0:=\mathrm{VaR}_{p_i}(X_i)$, where $\mathrm{VaR}_{p_i}(X_i)$ is the value-at-risk at the $p_i^{\textrm{th}}$ quantile of the underlying loss $X_i$. That is,
\[
\mathrm{VaR}_{p_i}(X_i)=\inf \{ x\in \mathbb{R}~:~ F_i(x)\ge p_i \},
\]
which, in the insurance lingo,  can be interpreted as the smallest premium that needs to be charged in order to cover at least $p_i\times 100\%$ of losses. It is well known (e.g., McNeil et al., 2015) that $h_i(x_i^0)$ is equal to $(1-p_i)\mathrm{AVaR}_{p_i}(X_i)$, where $\mathrm{AVaR}_{p_i}(X_i)$ is the average-value-at-risk at the $p_i^{\textrm{th}}$ quantile of the underlying loss $X_i$, that is,
\[
\mathrm{AVaR}_{p_i}(X_i)= {1\over 1-p_i} \int_{p_i}^{\infty }\mathrm{VaR}_{t}(X_i)\mathrm{d}t.
\]
This is one of the fundamental risk measures currently in use in insurance and banking.

When $\ell_i(x_i)$ is given by equation (\ref{loss-1}), the function $h_i$ is convex, but other forms of the loss function $\ell_i$ are also of much interest. For example, let
\begin{equation}\label{loss-2}
\ell_i(x_i)=(1-p_i)x_i^{\alpha_i}
\end{equation}
for some $\alpha_i>0$. Under this loss function, the function $h_i$ may or may not be convex over its entire domain of definition, depending on the value of $\alpha_i$, but it can nevertheless be convex in some regions of its domain of definition. It therefore becomes natural to specify those regions where the function $h_i$ is convex; and where it is not, we would then wish to assess the extent of its non-convexity. The indices introduced in Section \ref{results=1} provide much needed answers and insights into such issues, and we shall illustrate this in a moment.

\subsection{Parameter choices}
\label{choices}

The losses emanating from the individual business lines need to be aggregated onto the company's level, which can successfully be implemented with the help of the weighted generalized mean
\begin{equation}\label{hhh-2}
h(\mathbf{x})=\bigg ( \sum_{i=1}^d w_i h_i^{\beta}(x_i) \bigg )^{1/\beta }
\end{equation}
for appropriately chosen values of the parameter $\beta \in \mathbb{R}$ and weights $w_i\ge 0$, which have to be such that $\sum_{i=1}^d w_i=1$. For example, the parameter $\beta$ could be equal to $-1$, $0$, $1$ or $2$, which give rise to the harmonic, geometric, arithmetic, and quadratic means, respectively. These are also the choices that we adopt in our following numerical explorations.

For the sake of expository simplicity, we deal with only two business lines, $d=2$, and set the two weights $w_1$ and $w_2$ to $1/2$, which in practical terms means that the two business lines are viewed as being of the same importance within the company. Furthermore, let the two loss functions $\ell_i$ be those defined by equation (\ref{loss-2}) with the parameter choices $p_i=0.99$, which corresponds to the $99^{\textrm{th}}$ risk percentile, and $\alpha_i=1/4$ for both $i=1,2$.
Since the expected shortfall $\mathbf{E}[(X_i-x_i)_{+}]$ can be written as $\int_{x_i}^{\infty }(1-F_i(t))\mathrm{d}t$, function (\ref{hhh-2}) becomes
\begin{align*}
h(x,y)&=\bigg ( {1\over 2}\bigg ( \int_x^{\infty }(1-F_1(t))\mathrm{d}t + (1-p_1)x^{\alpha_1}\bigg )^{\beta}
+{1\over 2}\bigg ( \int_x^{\infty }(1-F_2(t))\mathrm{d}t + (1-p_2)y^{\alpha_2}\bigg )^{\beta} \bigg )^{1/\beta}
\\
&= \bigg ( {1\over 2}\bigg ( \int_x^{\infty }(1-F_1(t))\mathrm{d}t + 0.01 x^{1/4}\bigg )^{\beta}
+{1\over 2}\bigg ( \int_x^{\infty }(1-F_2(t))\mathrm{d}t + 0.01 y^{1/4}\bigg )^{\beta} \bigg )^{1/\beta}.
\end{align*}
For the sake of simplicity and thus transparency, let $X_1$ and $X_2$ follow the uniform on $[0,1]$ distribution, which means that their cdf's  $F_1(t)$ and $F_2(t)$ are equal to $t$ on the unit interval $[0,1]$. (We can interpret $X_1$ and $X_2$ as percentages.) This turns the function $h$ into
\begin{equation}\label{func-graphing}
h_{\beta }(x,y):=\bigg ( {1\over 2} g^{\beta}(x) + {1\over 2} g^{\beta}(y) \bigg )^{1/\beta}
\end{equation}
defined on the unit square $[0,1]\times [0,1]$, where
$g(z)= 0.5(1-z)^2 +0.01 z^{1/4}$ for all $ 0\le z\le 1 $.
We wish to explore the regions of convexity as well as of non-convexity of the function $h_{\beta }$, and to also quantify the extent of non-convexity when the function is not convex.

In Figures \ref{fig-41} and \ref{fig-42} we plot the function $h_{\beta }$ for the aforementioned four parameter $\beta $ values, that is, for $\beta=-1$, $0$, $1$ and $2$.
\begin{figure}[h!]
\centering
\subfigure[Function $h_{\beta }(x,y)$ when $\beta=-1$]{\includegraphics[width=0.42\textwidth]{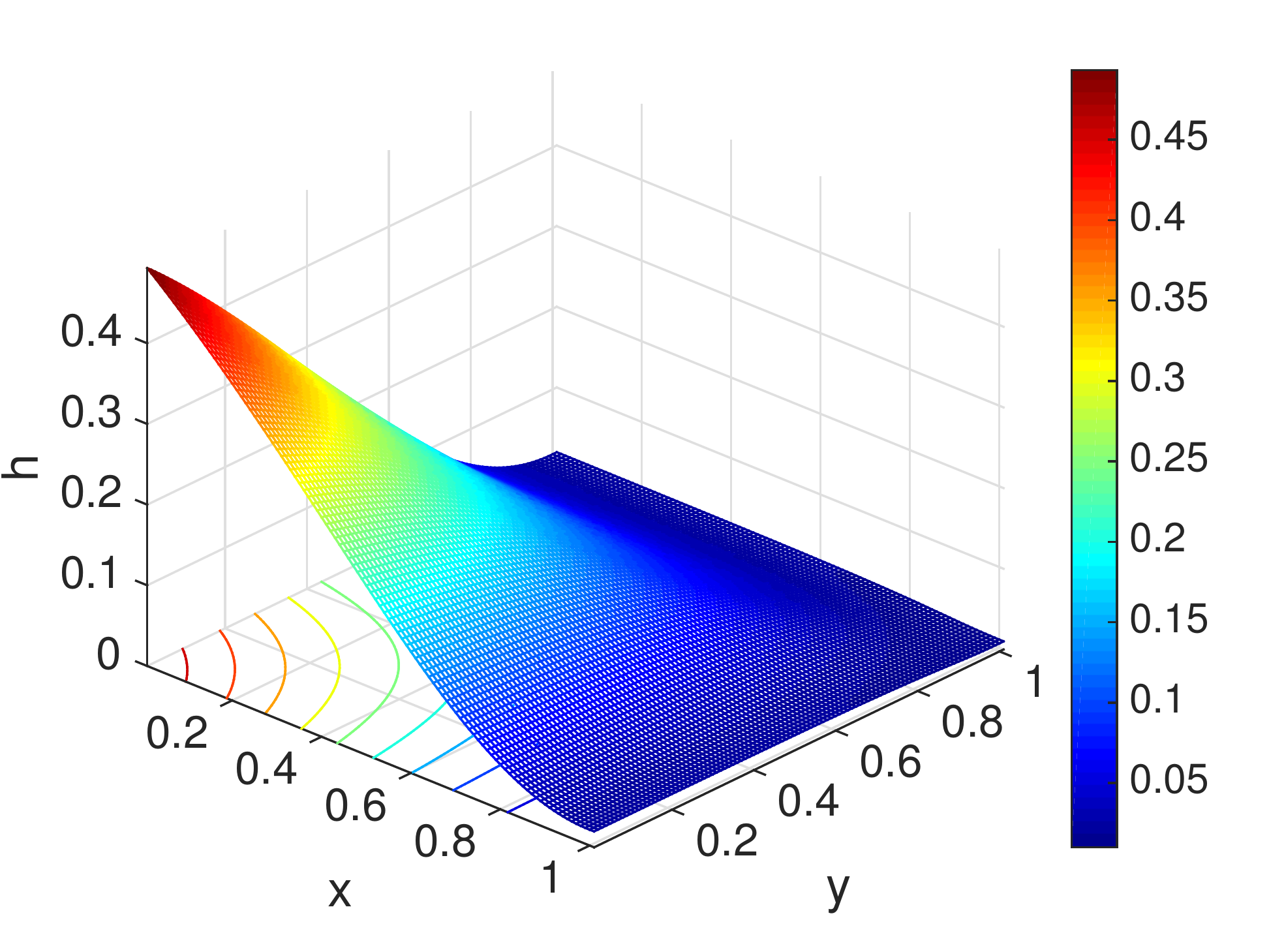}}\qquad
\subfigure[Function $h_{\beta }(x,y)$ when $\beta=0.001$]{\includegraphics[width=0.42\textwidth]{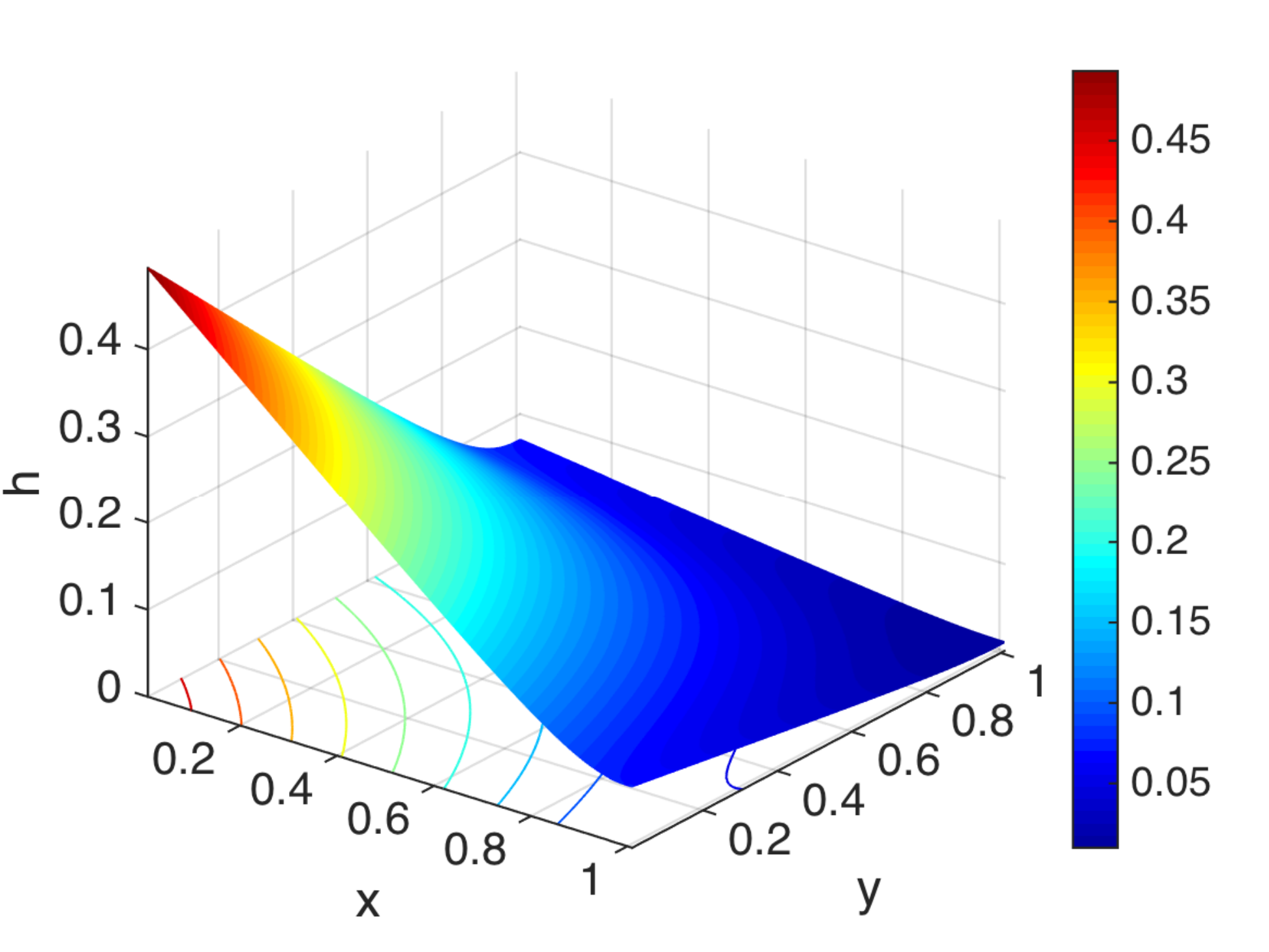}}\\\vspace*{8mm}
\subfigure[Global index $\mathrm{CONV}(a)$ when $\beta=-1$]{\includegraphics[width=0.42\textwidth]{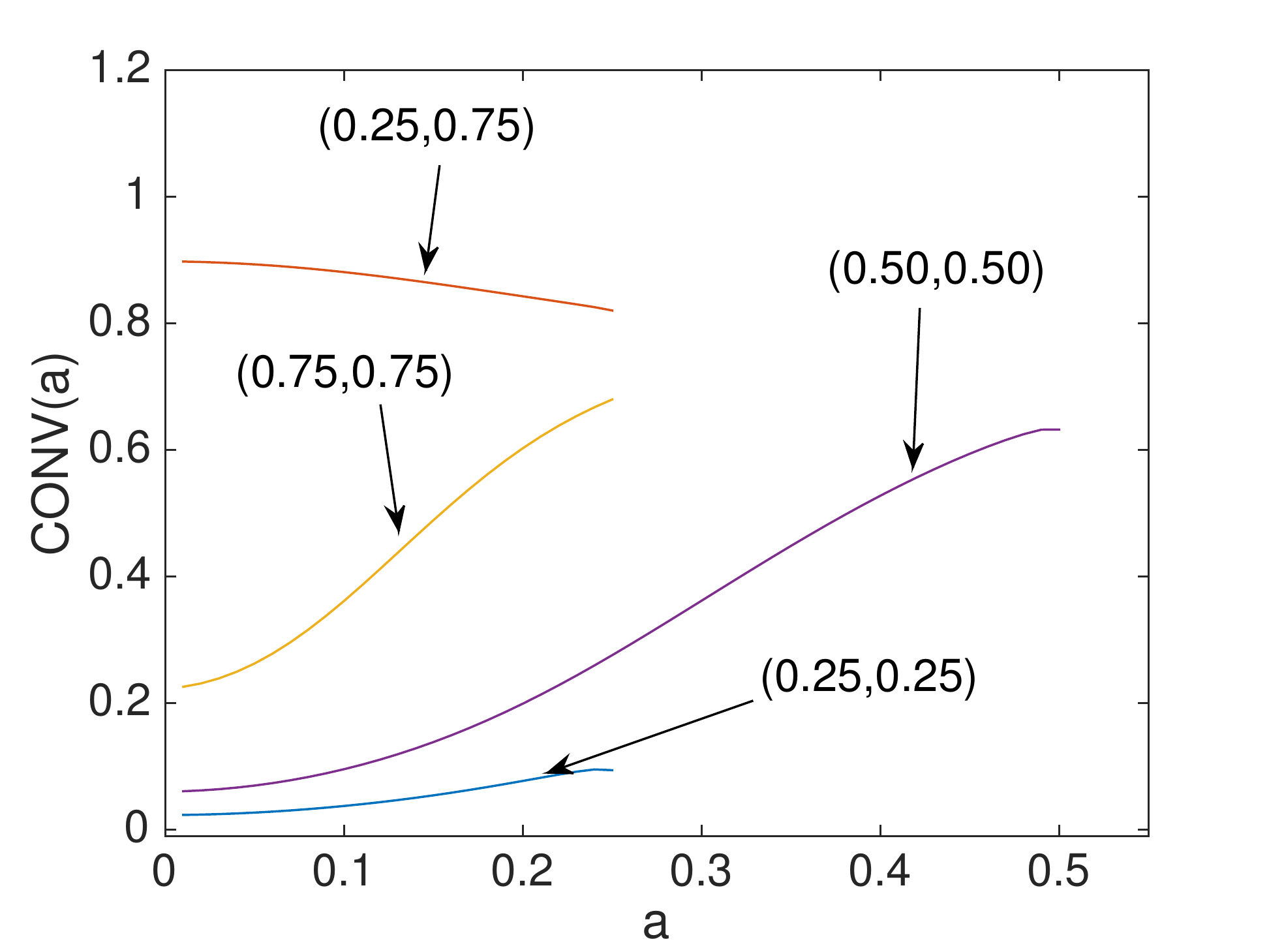}}\qquad
\subfigure[Global index $\mathrm{CONV}(a)$ when $\beta=0.001$]{\includegraphics[width=0.42\textwidth]{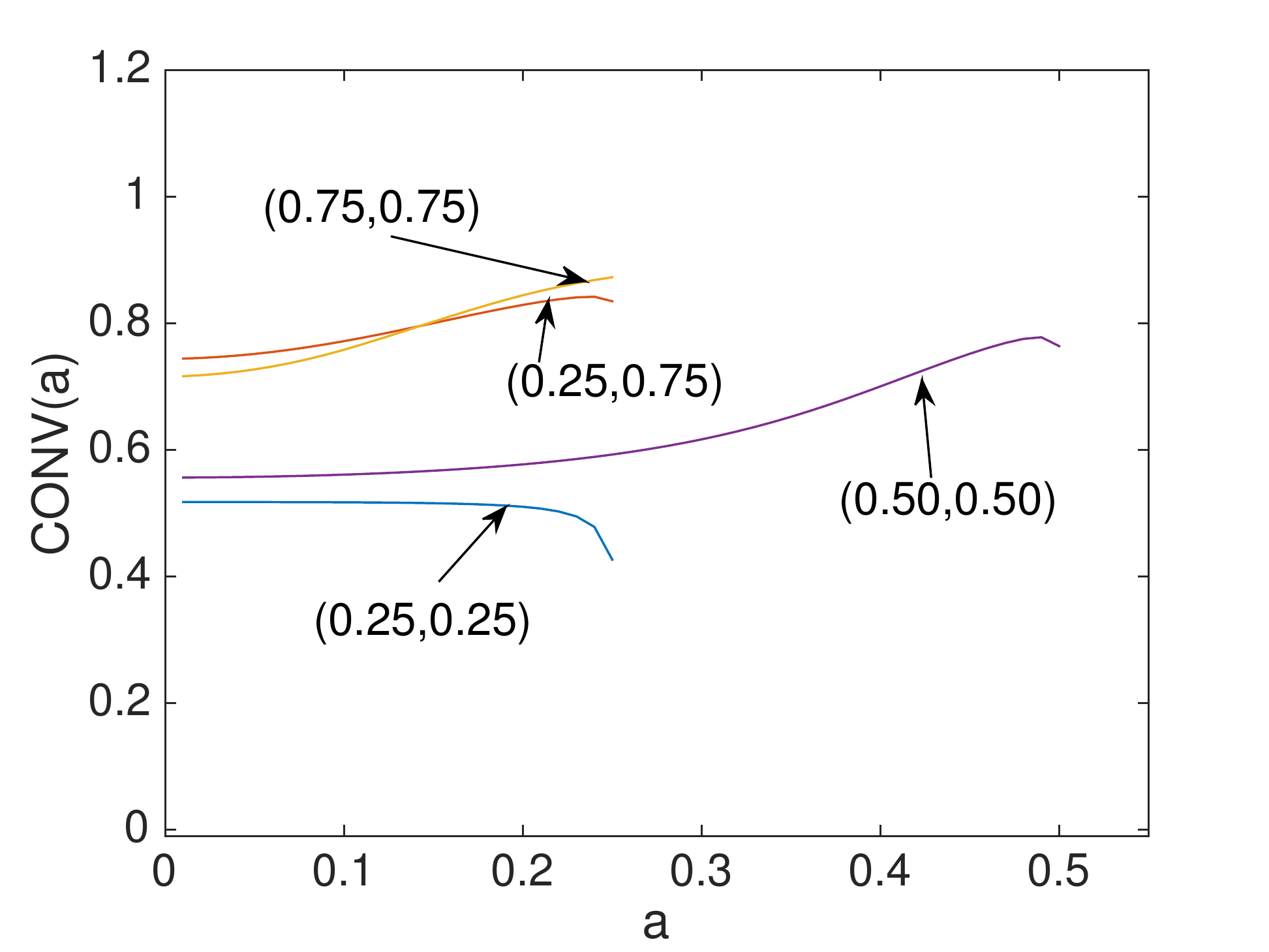}}
\caption{The function $h_{\beta }(x,y)$ and its index $\mathrm{CONV}(a)$ for $\beta =-1$ (harmonic mean) and $\beta =0.001$ (nearly geometric mean).}
\label{fig-41}
\end{figure}
Though it may look simple, the function $h_{\beta }$ is not simple enough to easily obtain  closed-form formulas for the two eigenvalues of the Hessian, unlike we did in Example \ref{example-1} above. Hence, we resort to numerical methods, which are of primary practical interest anyway, and in addition to the surfaces themselves, in Figures~\ref{fig-41} and \ref{fig-42} we also depict the global convexity index
\begin{equation}
\mathrm{CONV}(a):=\mathrm{CONV}(h_{\beta },S_{x_0,y_0}(a))
= { \int_{x_0-a}^{x_0+a}\int_{y_0-a}^{y_0+a}\big(\lambda_1^{+}(x,y)+\lambda_2^{+}(x,y)\big) dxdy
\over
\int_{x_0-a}^{x_0+a}\int_{y_0-a}^{y_0+a}\big( |\lambda_1(x,y)|+|\lambda_2(x,y)|\big) dxdy }
\label{ind-graphing}
\end{equation}
over the square
\[
S_{x_0,y_0}(a)=(x_0,y_0)+(-a,a)\times (-a,a)
\]
and under the parameter specifications
\begin{itemize}
  \item $(x_0,y_0)=(0.25,0.25)$ and $0<a<0.25$
  \item $(x_0,y_0)=(0.25,0.75)$ and $0<a<0.25$
  \item $(x_0,y_0)=(0.75,0.75)$ and $0<a<0.25$
  \item $(x_0,y_0)=(0.50,0.50)$ and $0<a<0.5$
\end{itemize}
The difference between the ranges of $a$ in the above specifications is due to the need to keep the square
$S_{x_0,y_0}(a)$ inside the domain of definition $[0,1]\times [0,1]$ of the function $h_{\beta }$.
\begin{figure}[h!]
\centering
\subfigure[Function $h_{\beta }(x,y)$ when $\beta=1$]{\includegraphics[width=0.42\textwidth]{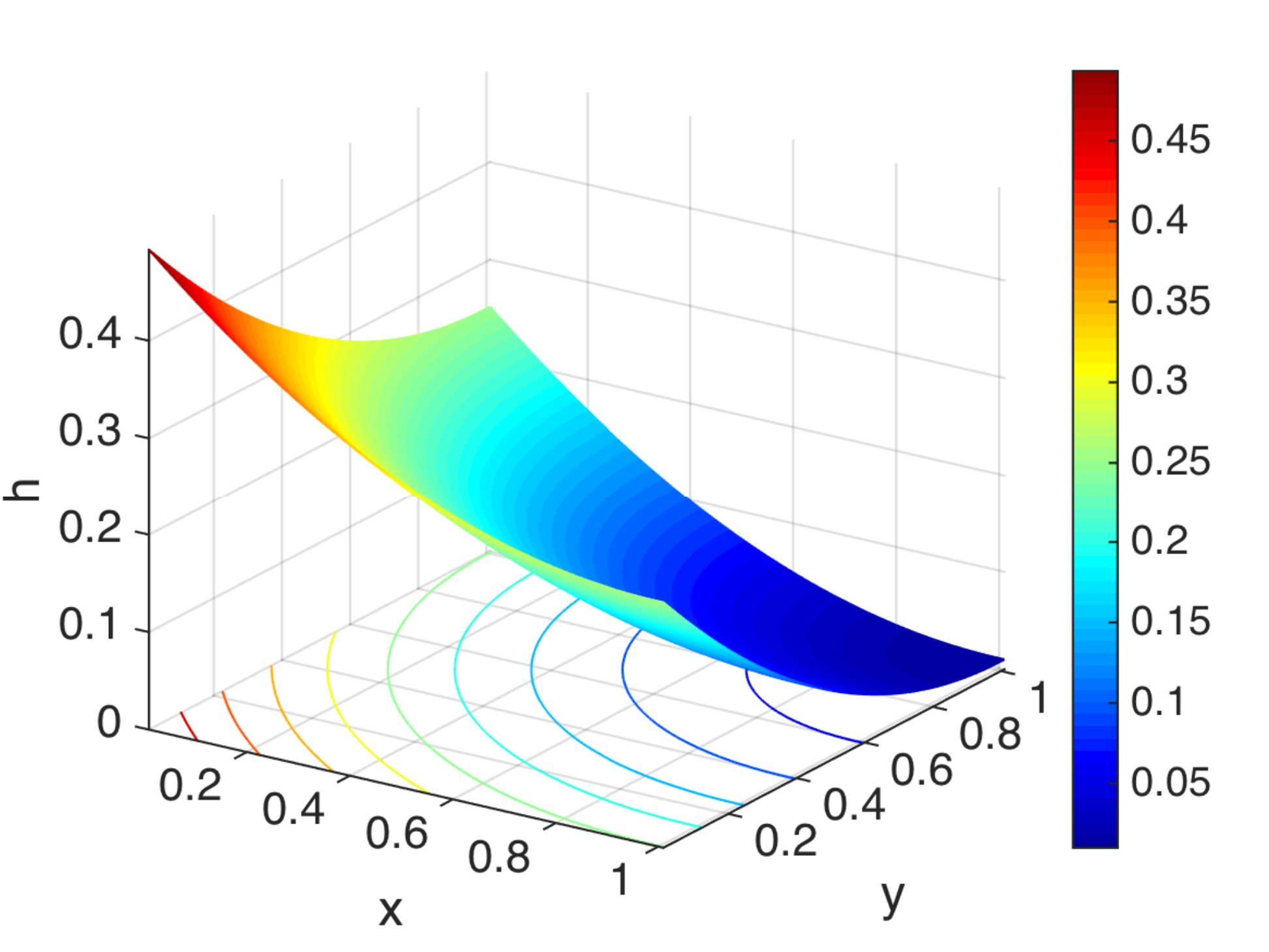}}\qquad
\subfigure[Function $h_{\beta }(x,y)$ when $\beta=2$]{\includegraphics[width=0.42\textwidth]{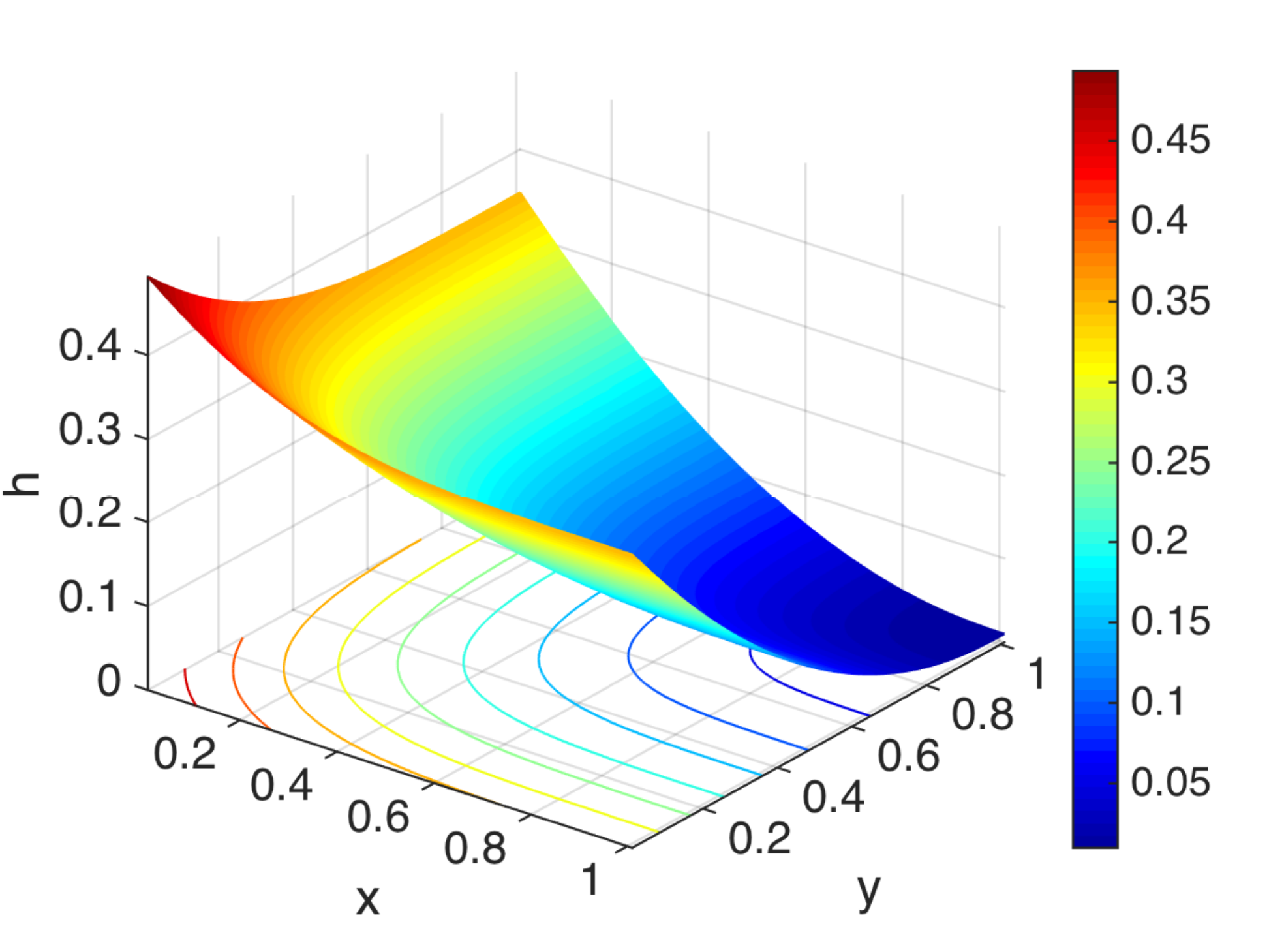}}\\\vspace*{8mm}
%\subfigure[Contours of $h_{\beta }(x,y)$ when $\beta=1$]{\includegraphics[width=0.42\textwidth]{c1}}\qquad
%\subfigure[Contours of $h_{\beta }(x,y)$ when $\beta=2$]{\includegraphics[width=0.42\textwidth]{c2}}\\\vspace*{8mm}
\subfigure[Global index $\mathrm{CONV}(a)$ when $\beta=1$]{\includegraphics[width=0.42\textwidth]{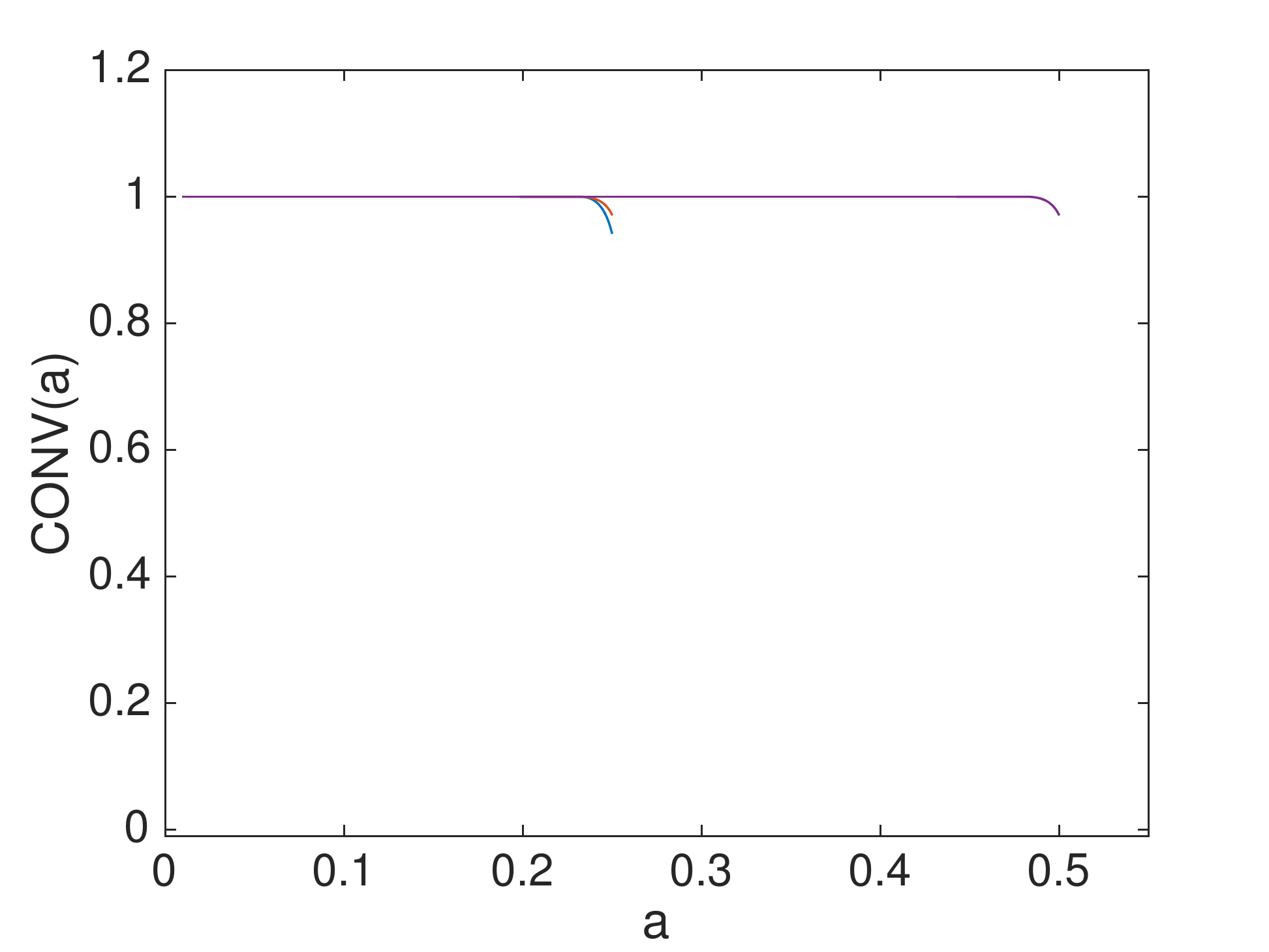}}\qquad
\subfigure[Global index $\mathrm{CONV}(a)$ when $\beta=2$]{\includegraphics[width=0.42\textwidth]{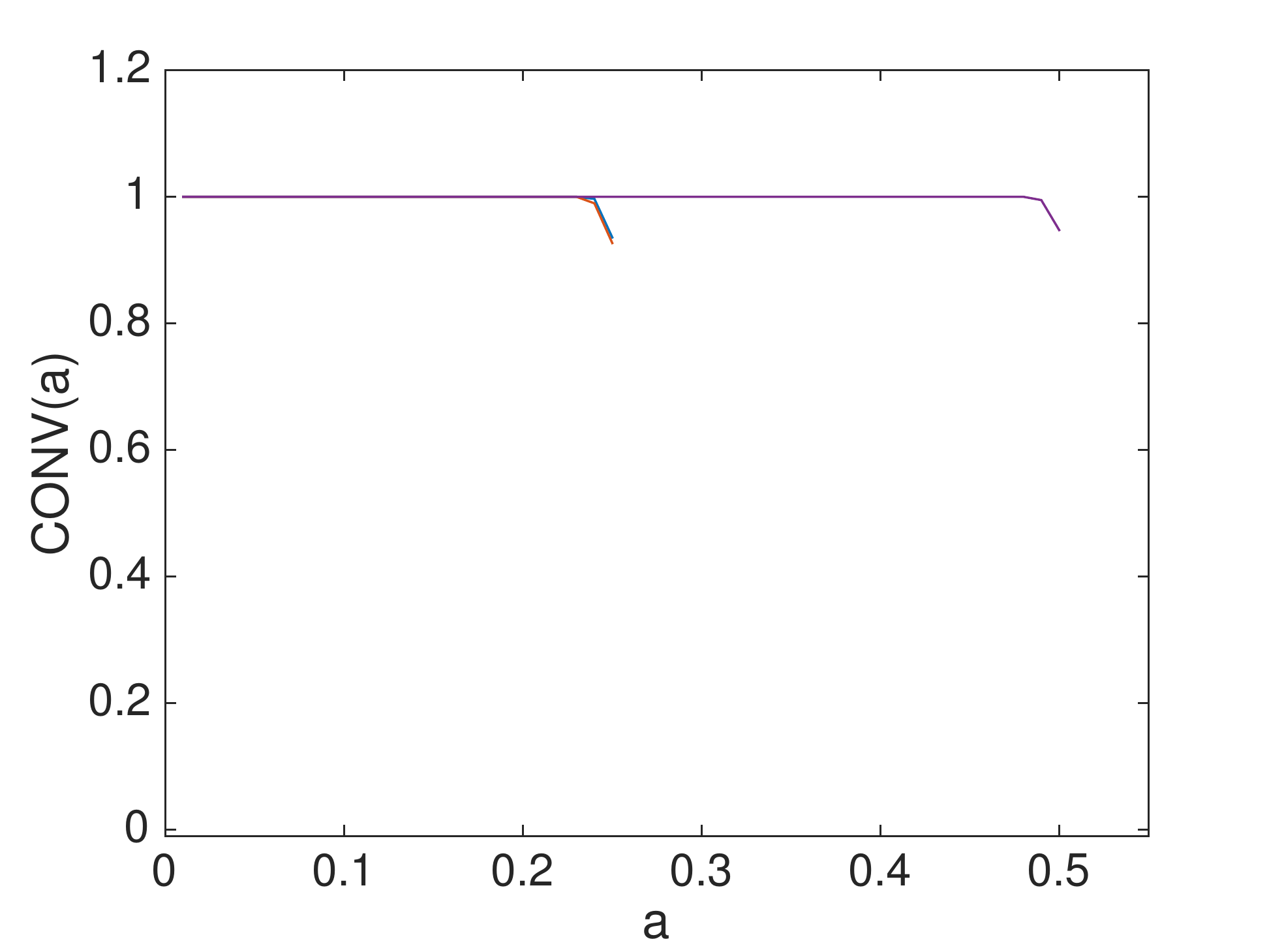}}
\caption{The function $h_{\beta }(x,y)$ and its index $\mathrm{CONV}(a)$ for $\beta =1$ (arithmetic mean) and $\beta =2$ (quadratic mean).}
\label{fig-42}
\end{figure}

\subsection{Findings}
\label{findings}

We now comment on Figures \ref{fig-41} and \ref{fig-42}, and start with the two  left-hand panels of Figure~\ref{fig-41} that correspond to the case $\beta=-1$ of the harmonic mean. Note that the three points $(x_0,y_0)=(0.25,0.25)$, $(0.5,0.5)$, and $(0.75,0.75)$ are on the ridgeline of the surface, and thus result in low starting values of the global index $\mathrm{CONV}(a)$. Nevertheless, the index increases when the values of $a$ increase, due to the fact that larger convex regions on both sides of the ridgeline are absorbed into the index by the expanding square $S_{x_0,y_0}(a)$. Note that the functions $\mathrm{CONV}(a)$ corresponding to the aforementioned three points $(x_0,y_0)$ dominate each other, which is a consequence of the fact that the points are located in places of decreasing sharpness  of the ridgeline. Note also that the function $\mathrm{CONV}(a)$ that corresponds to the point $(x_0,y_0)=(0.25,0.75)$ is quite high, due to the fact that this point is far away from the ridgeline, but since the square $S_{x_0,y_0}(a)$ approaches the ridgeline when $a$ increases from $0$ to $0.25$, the function tends to decrease due to the absorption of increasingly larger non-convex regions.

We now look at the two right-hand panels of Figure~\ref{fig-41} that correspond to the case $\beta=0.001$. If compared to the previous case $\beta=-1$, the surface $h_{\beta }$, though still somewhat concave, it nevertheless has lost its pronounced ridgeline and thus a part of its concavity. This is reflected by higher positioned functions $\mathrm{CONV}(a)$ in right-hand panel (d) of Figure~\ref{fig-41} than those in left-hand panel (c) of the same figure. Another feature of the function $\mathrm{CONV}(a)$ when $\beta=0.001$ is that it drops noticeably at the end of its domain of definition, which in the case of the point $(0.25,0.25)$ would be near $a=0.25$.

The functions $\mathrm{CONV}(a)$ depicted in panels (c) and (d) of Figure~\ref{fig-42} are  virtually equal to $1$, except near the end-points $a=0.25$ and $a=0.5$ of the respective  domains of definition, which depend on the point $(x_0,y_0)$. This suggests virtually total convexity of the surfaces $h_{\beta }$ corresponding to $\beta =1$ (arithmetic mean) and $\beta =2$ (quadratic mean). Nevertheless, the functions are dropping at the end of their respective domains of definition, which suggests some loss of convexity near the edges of the unit square $[0,1]\times [0,1]$. To understand the extent of this loss, we magnify the functions near $a=0.25$ and depict them in Figure~\ref{fig-43}.
\begin{figure}[h!]
\centering
\subfigure[When $\beta=1$]{\includegraphics[width=0.41\textwidth]{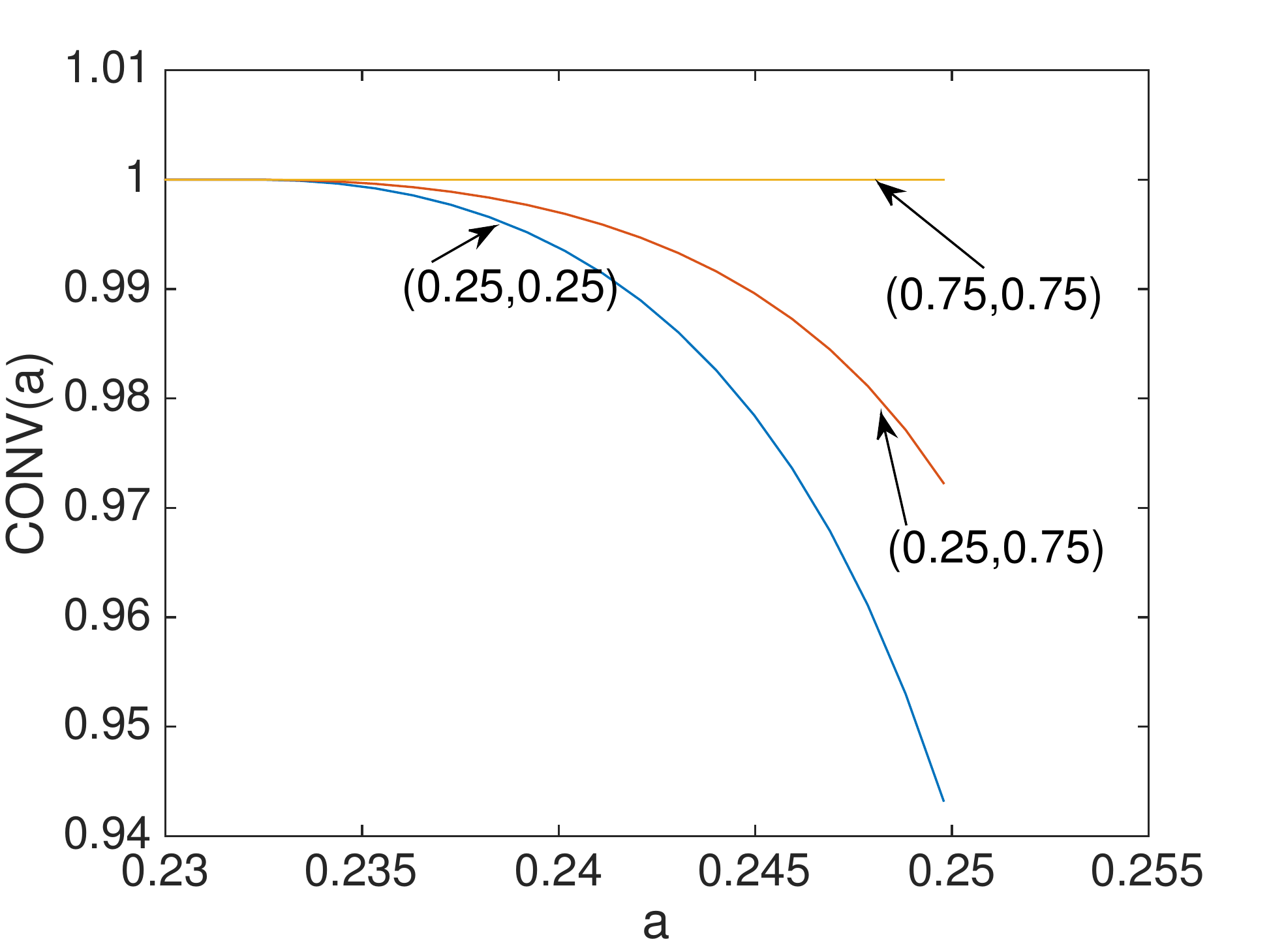}}\qquad
\subfigure[When $\beta=2$]{\includegraphics[width=0.41\textwidth]{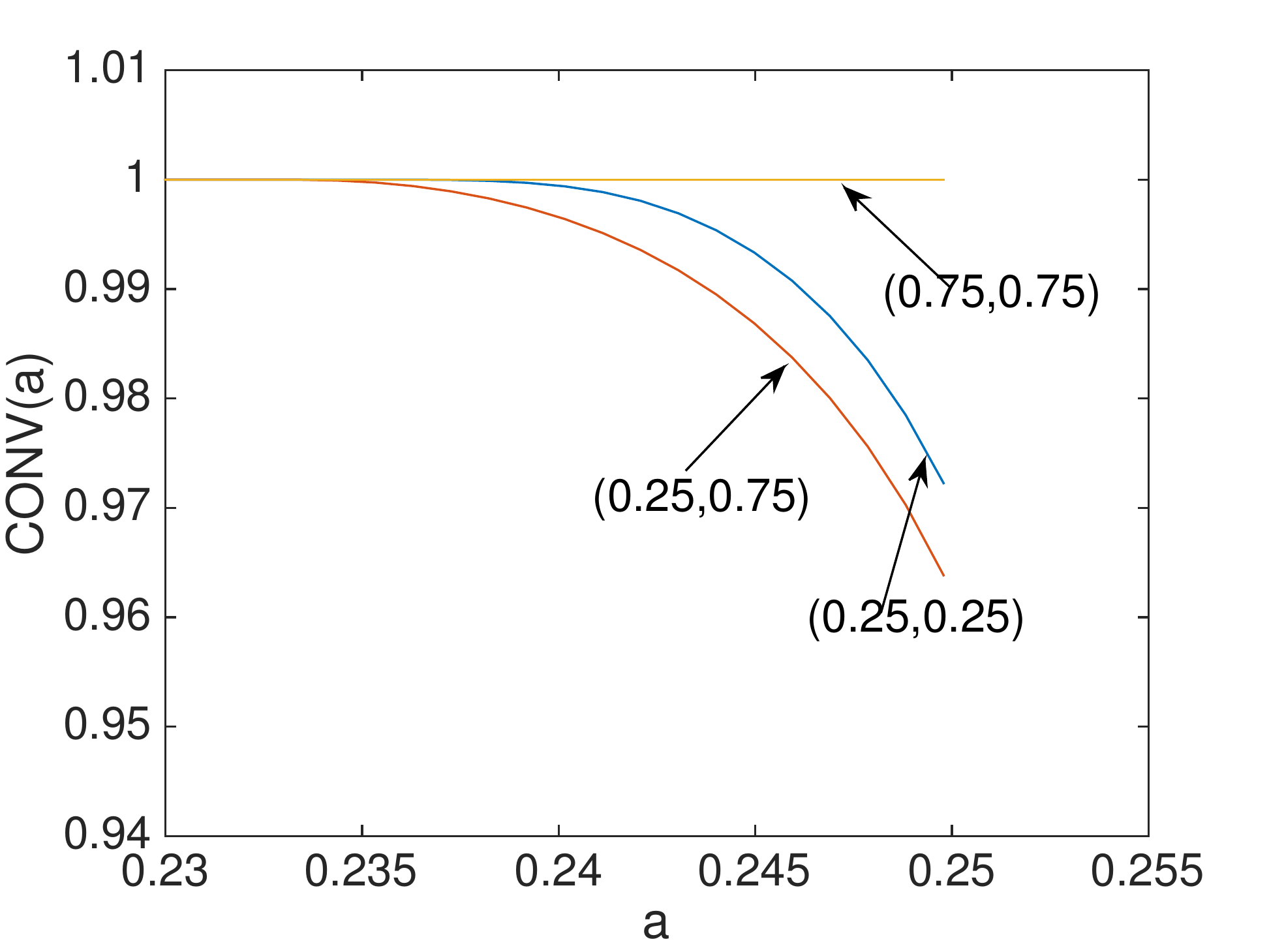}}\qquad
\caption{Magnified graphs of the convexity index $\mathrm{CONV}(a)$.}
\label{fig-43}
\end{figure}
To see where the loss of convexity happens on the surface $h_{\beta }$, we visualize small portions of it in Figure~\ref{fig-44} near the vertices $(0,0)$ and $(0,1)$ under the parameter choice $\beta=1$. (When $\beta=2$, the graphs are virtually the same and we therefore do not present them.) Note also from Figure~\ref{fig-43} that, as suggested by higher values of $\mathrm{CONV}(a)$ in right-hand panel (b) than in left-hand panel (a), the loss of convexity near the edges of the domain of definition $[0,1]\times [0,1]$ is smaller when $\beta=2$ than when $\beta=1$.
\begin{figure}[h!]
\centering
\subfigure[Near the vertex $(0,0$)]{\includegraphics[width=0.42\textwidth]{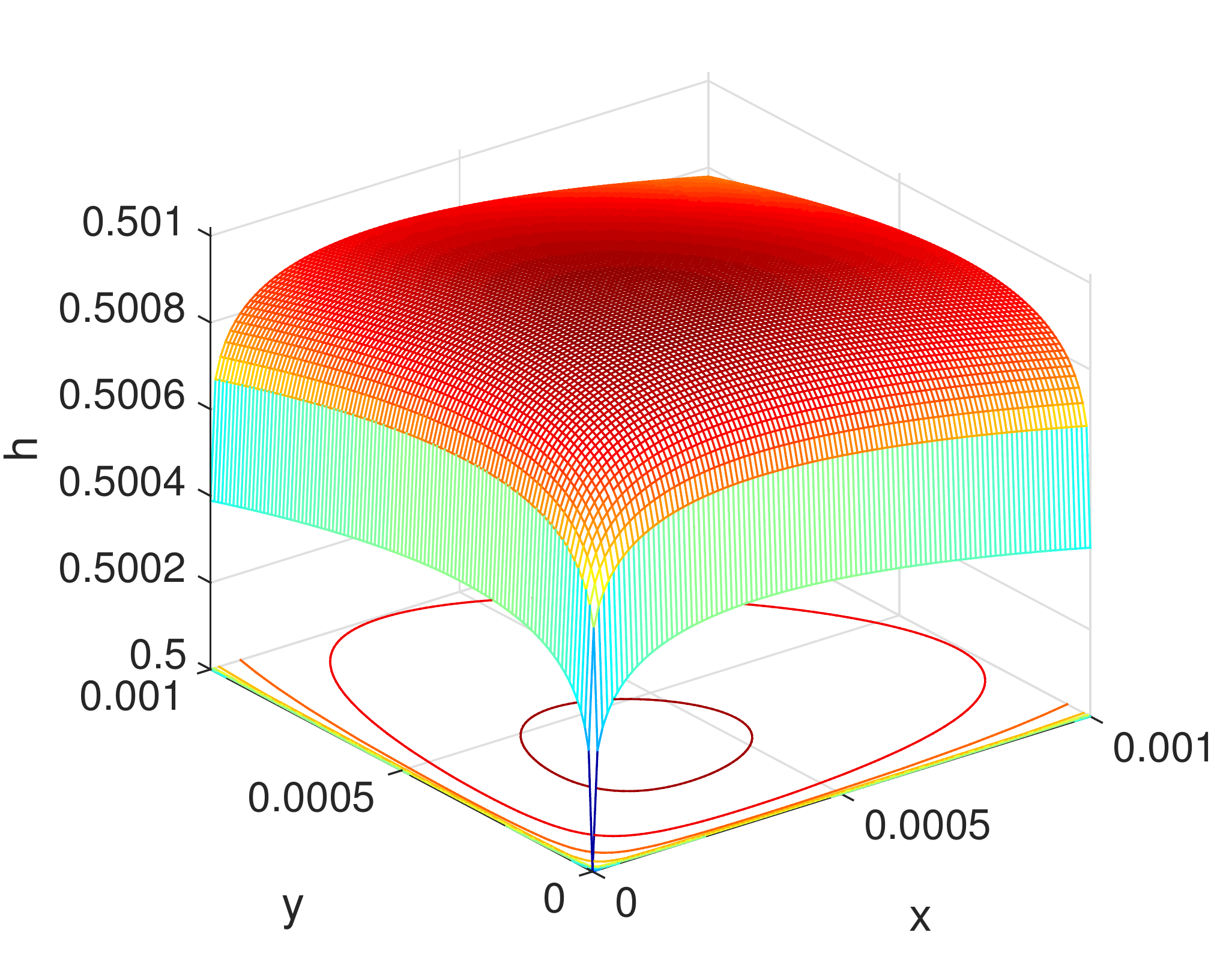}}\qquad
\subfigure[Near the vertex $(0,1$)]{\includegraphics[width=0.42\textwidth]{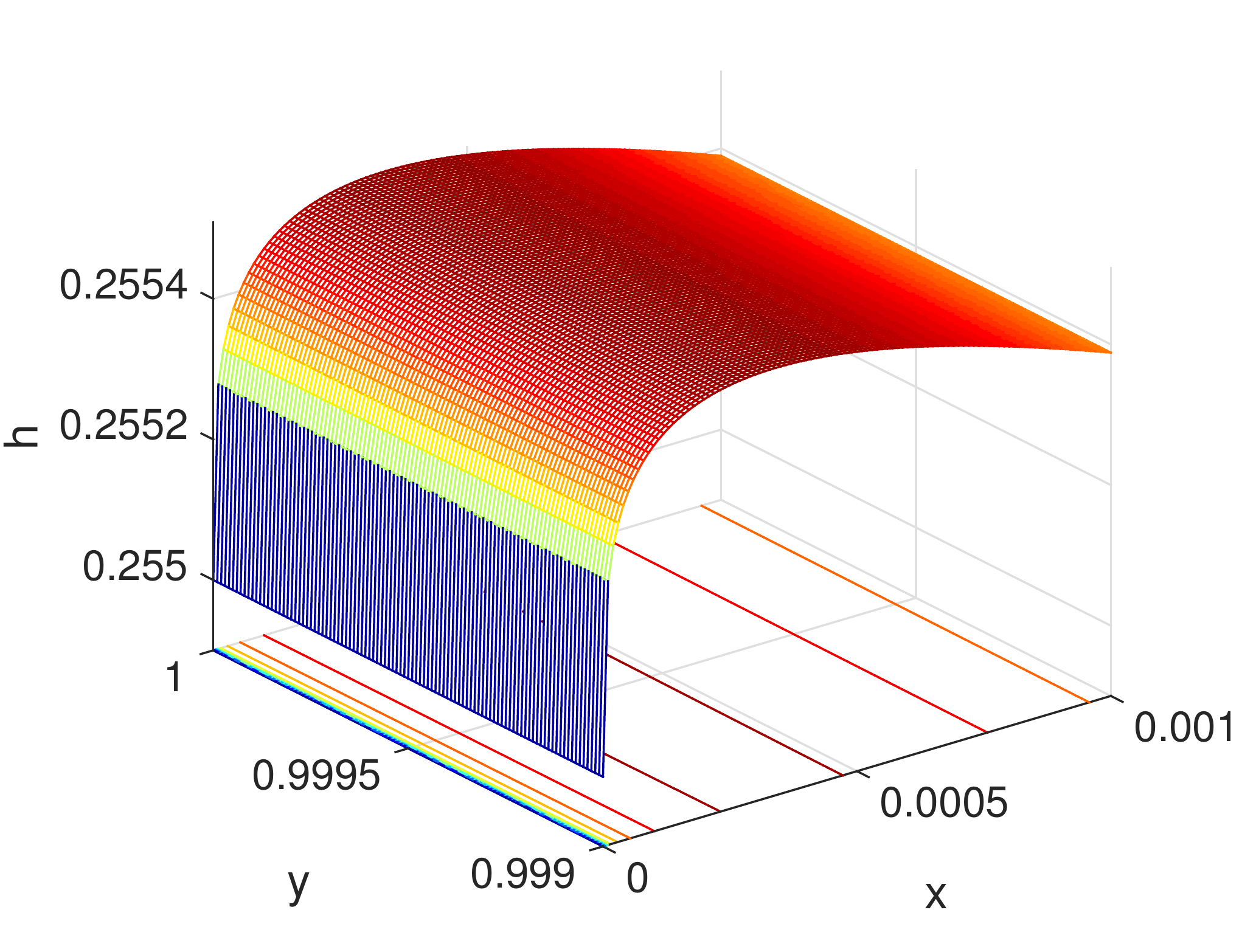}}\vspace*{8mm}
\caption{The magnified function $h_{\beta }(x,y)$ for $\beta=1$.}
\label{fig-44}
\end{figure}

\section{Concluding notes}
\label{conclude}

Theorem \ref{def-1} is the backbone of this paper. It introduces an index for measuring  lack of positive semidefiniteness in symmetric matrices. When specialized to the Hessian of sufficiently smooth functions, the theorem gives rise to an index suitable for measuring lack of convexity in functions, as discussed in Section \ref{results=1}. We have extensively illustrated these theoretical results in Section \ref{illustration} with the help of a non-convex function, defined on a two-dimensional convex region, that arises naturally in applications.

Since our explorations of convexity, or lack of it, rely on the Hessian and its eigenvalues, the function under consideration must have second partial derivatives. Of course, this requirement may not always be fulfilled, like for example in the two limiting cases  $\beta \downarrow -\infty $ and $\beta \uparrow \infty $ of the function $h_{\beta }$ defined by equation (\ref{func-graphing}), which give rise to non-differentiable functions $\min\{g(x),g(y)\}$ and $\max\{g(x),g(y)\}$, respectively. To accommodate such functions, the herein developed method requires a modification, which we posit as a future problem.

It should also be noted that, from the practical point of view, the aforementioned non-differentiable limiting cases may not be necessary as one could work with functions like $h_{\beta }$ for very small (e.g., $\beta=-1000$) or very large (e.g., $\beta=1000$) values of $\beta $, depending on the problem at hand. Though this weakening of the problem alleviates the issue associated with the existence of second partial derivatives, it comes with the necessity of employing considerable computing power.

\section*{Acknowledgments}

The research has been supported by a grant from the Natural Sciences and Engineering Research Council (NSERC) of Canada.

\end{document}